\newcommand{\ex}[1]{\mathbb{E}\left[ #1 \right] }
\newcommand{\norm}[1]{\left\lVert #1 \right\rVert}
\newcommand{\norms}[1]{\lVert #1 \rVert}
\newcommand{\Q}{ {\mathbf Q}}
\newcommand{\A}{ {\mathbf A}}
\newcommand{\s}{ {\mathbf S}}
\newcommand{\U}{ {\mathbf U}}
\newcommand{\UU}{ {\mathbf U}}
\newcommand{\ck}{ {\mathbf c}^{(k)} }
\newcommand{\inner}[2]{\langle #1, #2 \rangle}
\newcommand{\lep}[1]{\mathop  \le \limits^{(#1)}}
\newcommand{\ep}[1]{\mathop  = \limits^{(#1)}}
\newtheorem{theorem}{Theorem}
\newtheorem{claim}{Claim}
\newtheorem{lemma}{Lemma}
\newtheorem{definition}{Definition}
\newtheorem{proposition}{Proposition}
\newtheorem{remark}{Remark}
\newtheorem{assumption}{Assumption}
\begin{document}
%
% paper title
% Titles are generally capitalized except for words such as a, an, and, as,
% at, but, by, for, in, nor, of, on, or, the, to and up, which are usually
% not capitalized unless they are the first or last word of the title.
% Linebreaks \\ can be used within to get better formatting as desired.
% Do not put math or special symbols in the title.
\title{A Note on Stein's Method for Heavy-Traffic Analysis}
\author{Xingyu Zhou\\Department of ECE\\The Ohio State University\\zhou.2055@osu.edu 
\and Ness Shroff\\Department of ECE and CSE\\The Ohio State University\\
shroff.11@osu.edu}

\maketitle

% As a general rule, do not put math, special symbols or citations
% in the abstract
% \begin{abstract}
% The abstract goes here.
% \end{abstract}

% no keywords

\begin{abstract}
  In this note, we apply Stein's method to analyze the steady-state distribution of queueing systems in the traditional heavy-traffic regime. Compared to previous methods (e.g., drift method and transform method), Stein's method allows us to establish stronger results with simple and template proofs. In particular, we consider discrete-time systems in this note. We first introduce the key ideas of Stein's method for heavy-traffic analysis through a single-server system. Then, we apply the developed template to analyze both load balancing problems and scheduling problems. All these three examples demonstrate the power and flexibility of Stein's method in heavy-traffic analysis. In particular, we can see that one appealing property of Stein's method is that it combines the advantages of both the drift method and the transform method.
\end{abstract}

% For peer review papers, you can put extra information on the cover
% page as needed:
% \ifCLASSOPTIONpeerreview
% \begin{center} \bfseries EDICS Category: 3-BBND \end{center}
% \fi
%
% For peerreview papers, this IEEEtran command inserts a page break and
% creates the second title. It will be ignored for other modes.
% \IEEEpeerreviewmaketitle

% \section{Introduction}
% The introduction goes here.

\section{Introduction}
Heavy-traffic analysis of queueing systems dates back to~\cite{kingman1962queues}, in which the author showed that the scaled waiting time of a $G$/$G$/$1$ system in heavy-traffic approaches an exponential random variable using diffusion approximations method. This method was then applied to a variety of works on parallel queues~\cite{foschini1978basic,chen2012asymptotic,reiman1984some,hanqin1989heavy,williams1998diffusion,harrison1998heavy,bramson1998state,stolyar2004maxweight}. The key idea behind diffusion approximations is to show that the scaled queue length process converges to a regulated Brownian motion. This process-level convergence often results in sample-path optimality in finite time. However, establishing the convergence of steady-state distribution requires the additional validation of the interchange of limits argument, which is often not taken (some exceptions include~\cite{gamarnik2006validity,budhiraja2009stationary}, in which the authors proved an interchange of limit argument for generalized Jackson networks with a fixed routing matrix).

Recently, the authors in~\cite{eryilmaz2012asymptotically} developed the so-called drift method, which purely relies on Lyapunov drift arguments and is able to directly obtain $\emph{steady-state}$ heavy-traffic results without considering the validation of the interchange of limits in diffusion approximations; see some applications in load balancing~\cite{zhou2017designing,zhou2018heavy,zhou2018flexible,maguluri2014heavy}, scheduling~\cite{maguluri2016heavy,wang2017heavy} and bandwidth sharing network~\cite{wang2018heavy}. In particular, the main idea behind the drift method is to set the mean drift of a particular test function to be zero in steady-state. Then, to obtain a tighter result in heavy-traffic, it often needs to establish the state-space collapse result, which roughly means that the system state would live on a lower-dimensional space rather than the original high-dimensional space. For example, by choosing the test function to be the quadratic function of the (weighted) sum queue lengths and establishing state-space collapses onto a one-dimensional subspace, drift method results in the first moment optimality of both Join-Shortest-Queue (JSQ) for the load balancing problem and MaxWeight for the scheduling problem~\cite{eryilmaz2012asymptotically}. That is, the first moment of (weighted) sum queue lengths in the parallel queueing systems converges in heavy-traffic to that of a hypothetical single-server system (which is a lower bound for the parallel queueing system).
Moreover, if a polynomial test function of degree $n+1$ is chosen, drift method results in the $n$th moment optimality of the control policy (e.g., JSQ and MaxWeight). Therefore, drift method enables us to inductively show convergence of the steady-state distribution.

Instead of considering all the polynomial test functions to inductively establish convergence of steady-state distribution, one could directly use the exponential test function (which naturally recovers all the polynomial functions via Taylor series). This is exactly the key idea behind the transform method introduced recently in~\cite{hurtado2018transform}. In particular,~\cite{hurtado2018transform} shows that under a certain condition for state-space collapse, the (weighted) sum queue lengths of the parallel queueing systems converges in distribution to an exponential random variable in heavy-traffic (which is in fact has the same distribution as the corresponding hypothetical single-server system). 

However, due to the use of exponential test function, transform method, compared to drift method, has to additionally establish an exponential type equation for the unused service and check the existence of moment generating function. Both of them need additional work, which makes transform method tend to lose the simplicity of drift method. Then, one may wonder if we can directly obtain the convergence of steady-state distribution (as in transform method) while maintaining the simplicity of drift method. Fortunately, the answer is yes and Stein's method comes to rescue. 
As it will become clear later, the main advantage of Stein's method is that it allows us to implicitly choose the exponential test function while actually working with a quadratic test function. In addition to the simplicity, Stein's method allows us to achieve even stronger results, that is, convergence in Wasserstein distance metric (which implies convergence in distribution) and the characterization of the convergence rate (which were not obtained in previous methods).

Stein's method is a theoretical tool to obtain bounds on the distance between two probability distributions~\cite{stein1972bound}. It was first introduced to study queueing system by~\cite{gurvich2014diffusion}. Steady-state diffusion approximations for Erlang-A and Erlang-C models were investigated by Stein's method in~\cite{braverman2017stein}. For load balancing problems, Stein's method has been used in different asymptotic regimes including large-system regime~\cite{ying2016approximation}, many-server heavy-traffic regime~\cite{liu2019universal,braverman2018steady}. In the traditional heavy-traffic regime, using Stein's method, a single-server system was studied in the continuous-time setting~\cite{gaunt2020stein}.

In this note, we consider discrete-time queueing systems in the traditional heavy-traffic regime with Stein's method. We first apply Stein's method to analyze the distance of the steady-state distribution of the single-server system to an exponential distribution. Note that this distance bound is universal for all the traffic loads, and, in the heavy-traffic limit, the bound goes to zero. Similar to~\cite{braverman2017stein}, key steps in applying Stein's method for heavy-traffic analysis are identified through this single-server example, which serve as a template for more general problems. Based on this template, we then apply Stein's method to obtain the convergence of steady-state distribution for both load balancing problems and scheduling problems in parallel queueing systems. Through these examples, we can observe that besides the additional gradient bounds involved in Stein's method (which are quite standard and easy to obtain), all the other bounds follow exactly from those obtained by choosing a quadratic test function as in~\cite{eryilmaz2012asymptotically}. Thus, in some sense, Stein's method allows us to establish steady-state distribution results by just working on a quadratic test function rather than directly dealing with all the polynomial functions as in drift method or an exponential test function as in transform method. This is achieved by utilizing the gradient bounds for the solution to the Stein equation. 

The rest of the note is organized as follows. In Section~\ref{sec:model}, general models and preliminaries behind all the three examples are introduced. In Section~\ref{sec:single}, Stein's method is applied to analyze the steady-state distribution of a single-server system. Moreover, a template is developed for general problem. In Sections~\ref{sec:lb} and~\ref{sec:sche}, this template is adopted to study steady-state distribution of load balancing and scheduling problems, respectively.

\section{General model and preliminaries}
\label{sec:model}
In this section, we summarize the common model features behind the single-server problem, load balancing problem and scheduling problem. Moreover, we introduce some necessary notations and preliminaries before we dive into each problem.

We consider a single-hop queueing system in the discrete time, i.e., a time-slotted system. There are $N \ge 1$ separate servers, each of them maintains an infinite capacity FIFO queue. Once a task or job is in a queue, it remains in that queue until its service is completed. Each server is
assumed to be work conserving, i.e., a server is idle if and only if its corresponding queue is empty.

Let $Q_n(t)$ be the queue length (i.e., tasks in the queue and the server) of server $n$ at the beginning of time-slot $t$. Let $A_n(t)$ denote the number of arrivals in time-slot $t$ and $S_n(t)$ denote the amount of service that server n offers for queue $n$ in time-slot $t$. That is, $S_n(t)$ is the maximum number of tasks that can be completed by server $n$ at time-slot $t$. For ease of exposition, we assume that $A_n(t)$ and $S_n(t)$ both have finite supports as the main purpose of this paper is to demonstrate the key idea of Stein's method applied to heavy-traffic analysis. In the general cases with light-tailed distributions, a weaker result of the convergence rate can also be established. 

In each time-slot, the order of events is as follows. First, queue lengths (or partial queue lengths) are observed. Based on these observations, a control problem is solved (i.e., the load balancing problem or the scheduling problem). Then, arrivals happen and the server processes tasks at the end of each time slot. In particular, the evolution of the length of queue $n$ is given by
\begin{align}
  Q_n(t+1) = Q_n(t) + A_n(t) - S_n(t) + U_n(t),
\end{align}
where $U_n(t) = \max(S_n(t)-A_n(t)-Q_n(t),0)$ is the unused service due to an empty queue.

In this paper, we add a line on top of variables and vectors to denote steady-state (e.g., $\overline{\Q}$, $\overline{\A}$ and $\overline{\s}$). In order to perform our heavy-traffic analysis, we consider a set of systems parametrized by a positive parameter $\epsilon$. In particular, the parameter $\epsilon$ captures the distance of arrival vector to a particular point on the capacity region, i.e., a smaller $\epsilon$ means a heavier load. 

\begin{definition}
  A control policy is said to be throughput optimal if for any $\epsilon>0$, the system is positive recurrent and all the moments of $\norms{\overline{\Q}^{(\epsilon)}}$ are finite. 
\end{definition}

The main convergence metric used in this paper is the Wasserstein distance metric, which is defined as follows for non-negative random variables.
\begin{align*}
    d_W(X,Y) = \sup_{h\in\text{Lip}(1)} |\ex{h(X)} - \ex{h(Y)}|
  \end{align*}
where for a metric space $(\mathcal{S},d)$, $\text{Lip}(1)=\{h: \mathcal{S}\to\mathbb{R}, |h(x)-h(y)|\le d(x,y)\}$. The class $\text{Lip}(1)$ is simple to work with but at the same time rich enough so that convergence under the Wasserstein metric implies the convergence in distribution~\cite{gibbs2002choosing}.

\section{Stein's method for a single-server system}
\label{sec:single}
In this section, we apply Stein's method to a single-server problem. The main purpose of this section is to provide the template when applying Stein's method for heavy-traffic analysis, which can be followed to conduct heavy-traffic analysis for load balancing and scheduling problems later as well.

To distinguish from the multiple-server case, we will use lower-case letter (e.g., $q(t)$, $a(t)$) in this section. In particular, $a(t)$ is an integer-valued random variable, which is $\emph{i.i.d}$ across time-slots with mean of $\lambda$ and $s(t)$ is also a sequence of $\emph{i.i.d}$ random variables with mean of $\mu$. The arrival and service process are independent of each other and the queue length. We assume that that $a(t) \le A_{max}$ and $s(t) \le S_{max}$ for all $t$.
We consider a set of single-server system $\{q^{(\epsilon)}(t), t\ge 0\}$ parameterized by $\epsilon$ such that $\lambda^{(\epsilon) } = \mu - \epsilon$. Let $\bar{q}^{(\epsilon)}$, $\bar{a}^{(\epsilon)}$ and $\bar{s}$ denote random variable whose distribution is the same as the steady-state distribution of $\{q^{(\epsilon)}(t), t\ge 0\}$, $\{a^{(\epsilon)}(t), t\ge 0\}$ and $\{s(t), t\ge 0\}$. In particular, we have $\lambda^{(\epsilon)} = \ex{\bar{a}^{(\epsilon)}}$, $(\sigma_a^{(\epsilon)})^2 = \text{Var}[\bar{a}^{(\epsilon)}]$, $\mu = \ex{\bar{s}}$ and $\sigma_s^2 = \text{Var}[\bar{s}]$.
We assume that as $(\sigma_a^{(\epsilon)})^2$ approaches $\sigma_a^2$ as $\epsilon$ goes to zero.

% The main result is that the scaled queue length converges to an exponential distribution with mean of $\frac{(\sigma_a^{(\epsilon)})^2 + \sigma_s^2}{2}$ with rate $O(\epsilon)$, which is stated in the following theorem. 

  \begin{theorem}
  \label{thm:single}
    Consider the single-server system as described above and $Z \sim \text{Exp}(\frac{2 }{(\sigma_a^{(\epsilon)})^2 + \sigma_s^2 })$. Then, there exists a constant $K$ such that
    \begin{align*}
       d_W(\epsilon \bar{q}^{(\epsilon)},Z) \le K \epsilon.
    \end{align*}
  \end{theorem}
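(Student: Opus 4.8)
The plan is to run the standard three-step Stein's method program: introduce a Stein equation whose solution characterizes the target exponential law, bound the solution's derivatives, and evaluate the expectation of the associated Stein operator at $\epsilon\bar{q}^{(\epsilon)}$ using stationarity of the queue-length chain. Write $\sigma^2 := (\sigma_a^{(\epsilon)})^2 + \sigma_s^2$ and note that $Z$ is exactly the stationary distribution of the reflected Brownian motion on $[0,\infty)$ with drift $-1$ and variance $\sigma^2$, whose generator is $\mathcal{A}f(x) = \tfrac{\sigma^2}{2}f''(x) - f'(x)$ acting on functions with $f'(0)=0$. So for each $h \in \text{Lip}(1)$ I would take $f_h$ to be the solution of the Stein equation $\mathcal{A}f_h = h - \ex{h(Z)}$ on $[0,\infty)$ with $f_h'(0)=0$; explicitly $f_h'(x) = -\tfrac{2}{\sigma^2}\int_0^\infty e^{-2z/\sigma^2}\big(h(x+z)-\ex{h(Z)}\big)\,dz$. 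Since $\mathcal{A}f_h(x) = h(x)-\ex{h(Z)}$ pointwise, $\ex{h(\epsilon\bar{q}^{(\epsilon)})} - \ex{h(Z)} = \ex{\mathcal{A}f_h(\epsilon\bar{q}^{(\epsilon)})}$, so it suffices to bound $\big|\ex{\tfrac{\sigma^2}{2}f_h''(\epsilon\bar{q}^{(\epsilon)}) - f_h'(\epsilon\bar{q}^{(\epsilon)})}\big|$ by $K\epsilon$ uniformly in $h$.

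The needed derivative bounds come straight from this formula: differentiating gives $f_h''(x) = -\tfrac{2}{\sigma^2}\int_0^\infty e^{-2z/\sigma^2}h'(x+z)\,dz$ (a.e.), so $\norms{f_h''}_\infty \le \norms{h'}_\infty \le 1$; a further differentiation bounds the Lipschitz constant of $f_h''$ by $\tfrac{4}{\sigma^2}$; and $f_h'(0)=0$ while $f_h'$ grows at most linearly. Because the arrivals and services have finite supports $\sigma^2$ is bounded, and the hypothesis $(\sigma_a^{(\epsilon)})^2\to\sigma_a^2$ keeps $\sigma^2$ bounded away from $0$ for small $\epsilon$, so these constants are uniform in $\epsilon$ and $h$. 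Only $\norms{f_h''}_\infty$, the Lipschitz constant of $f_h''$, and the value $f_h'(0)=0$ actually enter the argument --- these are the ``gradient bounds'' referred to in the introduction.

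For the main estimate I would plug the test function $V(q) = f_h(\epsilon q)$ into the stationary identity $\ex{f_h(\epsilon\bar{q}^{(\epsilon)}(t+1)) - f_h(\epsilon\bar{q}^{(\epsilon)}(t))} = 0$ --- valid because $f_h$ has at most quadratic growth and $\bar{q}^{(\epsilon)}$ has finite moments --- and Taylor-expand to third order around $\epsilon\bar{q}^{(\epsilon)}(t)$, writing $\Delta q := \bar{a}^{(\epsilon)} - \bar{s} + \bar{u} = \bar{q}^{(\epsilon)}(t+1) - \bar{q}^{(\epsilon)}(t)$, which is bounded a.s.\ by $A_{max}+2S_{max}$. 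The cubic remainder is $O(\epsilon^3)$ by the Lipschitz bound on $f_h''$. In the first-order term, independence of $\bar{q}^{(\epsilon)}(t)$ from $(\bar{a}^{(\epsilon)}(t),\bar{s}(t))$ gives $\ex{f_h'(\epsilon\bar{q}^{(\epsilon)})(\bar{a}^{(\epsilon)}-\bar{s})} = -\epsilon\,\ex{f_h'(\epsilon\bar{q}^{(\epsilon)})}$, and the unused-service piece $\epsilon\,\ex{f_h'(\epsilon\bar{q}^{(\epsilon)})\bar{u}}$ is $O(\epsilon^3)$: on $\{\bar{u}>0\}$ one has $\bar{q}^{(\epsilon)} < S_{max}$, hence $|f_h'(\epsilon\bar{q}^{(\epsilon)})| = |f_h'(\epsilon\bar{q}^{(\epsilon)}) - f_h'(0)| \le \norms{f_h''}_\infty S_{max}\epsilon$ there (this is where $f_h'(0)=0$ is essential), while $\ex{\bar{u}} = \mu - \lambda^{(\epsilon)} = \epsilon$. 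In the second-order term, $(\Delta q)^2 = (\bar{a}^{(\epsilon)}-\bar{s})^2 + 2(\bar{a}^{(\epsilon)}-\bar{s})\bar{u} + \bar{u}^2$; the last two summands vanish off $\{\bar{u}>0\}$, whose probability is at most $\ex{\bar{u}} = \epsilon$ since $\bar{u}$ is a nonnegative integer, so they contribute $O(\epsilon^3)$ after the prefactor $\tfrac{\epsilon^2}{2}$, whereas independence gives $\ex{f_h''(\epsilon\bar{q}^{(\epsilon)})(\bar{a}^{(\epsilon)}-\bar{s})^2} = \ex{f_h''(\epsilon\bar{q}^{(\epsilon)})}(\sigma^2 + \epsilon^2)$. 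Collecting everything, the stationary identity reads $0 = \epsilon^2\,\ex{\tfrac{\sigma^2}{2}f_h''(\epsilon\bar{q}^{(\epsilon)}) - f_h'(\epsilon\bar{q}^{(\epsilon)})} + O(\epsilon^3)$; dividing by $\epsilon^2$, invoking the Stein equation, and taking the supremum over $h \in \text{Lip}(1)$ gives $d_W(\epsilon\bar{q}^{(\epsilon)},Z) \le K\epsilon$.

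The step I expect to be the main obstacle is the error bookkeeping in the last stage: every discarded term must be shown to be genuinely $O(\epsilon^3)$ so that it survives division by $\epsilon^2$ as an $O(\epsilon)$ term. The most delicate one is the unused-service term, which needs, at once, the probabilistic bound $P(\bar{u}>0) \le \ex{\bar{u}} = \epsilon$, the state-space-collapse-type fact that $\{\bar{u}>0\}$ forces $\bar{q}^{(\epsilon)}$ to be $O(1)$, and the analytic fact $f_h'(0)=0$; without the boundary condition this term would only be $O(\epsilon^2)$ and the claimed rate would be destroyed. A more routine point is checking that the derivative bounds are uniform in $\epsilon$ --- this is where $(\sigma_a^{(\epsilon)})^2\to\sigma_a^2$ is used, to keep $\sigma^2$ away from $0$ --- and that all the expectations arising are finite so that one may split the Taylor expansion term by term, which relies on $\bar{q}^{(\epsilon)}$ having finite moments.
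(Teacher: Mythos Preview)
Your proposal is correct and follows essentially the same Stein's-method template as the paper: same Stein equation (up to an inessential $\epsilon^2$ rescaling of $\sigma^2$ and $\theta$), same generator coupling via stationarity, same Taylor expansion, and the same gradient bounds from Lemma~\ref{lem:basis}. The only notable variant is your treatment of the unused-service piece: the paper splits off $f_h(\epsilon(\bar q+\bar a-\bar s))-f_h(\epsilon\bar q(t{+}1))$ and expands around $\epsilon\bar q(t{+}1)$ so that the exact identity $\bar u(t)\bar q(t{+}1)=0$ makes $\mathcal{T}_2$ vanish, whereas you expand everything around $\epsilon\bar q(t)$ and instead use that $\{\bar u>0\}$ forces $\bar q(t)<S_{\max}$ together with $f_h'(0)=0$ --- both routes give an $O(\epsilon^3)$ contribution and the same final $K\epsilon$ bound.
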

  \begin{proof}
    Consider the following Stein equation with $f_h'(0) = 0$ (\textbf{Step 1:} Stein equation (or Poisson equation))
    \begin{align}
    \label{eq:stein}
      \frac{1}{2}\sigma^2 f_h''(x)-\theta f_h'(x) = h(x) - \ex{h(Z)}.
    \end{align}
    If we replace $x$ by the random variable $\epsilon \bar{q}$ and take expectation on both sides, we obtain 
    \begin{align*}
      \ex{h(\epsilon\bar{q})} - \ex{h(Z)} = \ex{\frac{1}{2}\sigma^2 f_h''(\epsilon\bar{q})-\theta f_h'(\epsilon\bar{q})}
    \end{align*}
    Now, we can focus on the RHS of the equation above. (\textbf{Step 2:} Generator coupling)
    \begin{align}
    \label{eq:decomp}
      &\ex{\frac{1}{2}\sigma^2 f_h''(\epsilon\bar{q})-\theta f_h'(\epsilon\bar{q})}\nonumber\\
      \ep{a}&\ex{\frac{1}{2}\sigma^2 f_h''(\epsilon\bar{q})-\theta f_h'(\epsilon\bar{q}) - \left(f_h(\epsilon\bar{q}(t+1)) - f_h(\epsilon\bar{q}(t))\right)}\nonumber\\
      \ep{b}&\ex{\frac{1}{2}\sigma^2 f_h''(\epsilon\bar{q})-\theta f_h'(\epsilon\bar{q})}- \ex{f_h(\epsilon(\bar{q}(t) + \bar{a}(t) -\bar{s}(t) +\bar{u}(t)) - f_h(\epsilon\bar{q}(t))}
    \end{align}
    where (a) holds since $\bar{q}$ is in steady-state with all the moments bounded and the result (a) in Lemma~\ref{lem:basis}; (b) directly follows from the queueing dynamics.

    For the second expectation, we have (\textbf{Step 3:} Taylor expansion)
    \begin{align*}
      &\ex{f_h\left(\epsilon(\bar{q}(t) + \bar{a}(t) -\bar{s}(t) +\bar{u}(t)\right) - f_h\left(\epsilon\bar{q}(t)\right)}\nonumber\\
      =&\ex{f_h\left(\epsilon(\bar{q}(t) + \bar{a}(t) -\bar{s}(t))\right)-f_h\left(\epsilon\bar{q}(t)\right)} - \ex{f_h\left(\epsilon(\bar{q}(t) + \bar{a}(t) -\bar{s}(t))\right)- f_h\left(\epsilon(\bar{q}(t) + \bar{a}(t) -\bar{s}(t) +\bar{u}(t)\right)}\nonumber\\
      =& \ex{\epsilon f_h'\left(\epsilon\bar{q})(\bar{a}-\bar{s}\right) + \epsilon^2\frac{f_h''(\epsilon\bar{q})}{2}\left(\bar{a}-\bar{s}\right)^2 + \epsilon^3\frac{f_h'''(\eta)}{6} \left(\bar{a}-\bar{s}\right)^3} - \ex{-\epsilon \bar{u}f_h'(\epsilon\bar{q}(t+1)) + \epsilon^2\frac{f_h''(\xi)}{2}\bar{u}^2}\\
      =&\ex{\epsilon f_h'\left(\epsilon\bar{q})(\bar{a}-\bar{s}\right) + \epsilon^2\frac{f_h''(\epsilon\bar{q})}{2}\left(\bar{a}-\bar{s}\right)^2} + \ex{\epsilon^3\frac{f_h'''(\eta)}{6} \left(\bar{a}-\bar{s}\right)^3 + \epsilon \bar{u}f_h'(\epsilon\bar{q}(t+1)) - \epsilon^2\frac{f_h''(\xi)}{2}\bar{u}^2}\\
      \ep{a}& \ex{\epsilon^2\frac{f_h''(\epsilon\bar{q})}{2}\left((\sigma_a^{(\epsilon)})^2+\sigma_s^2+\epsilon^2\right) - \epsilon^2 f_h'(\epsilon\bar{q})} + \ex{\epsilon^3\frac{f_h'''(\eta)}{6} \left(\bar{a}-\bar{s}\right)^3 + \epsilon \bar{u}f_h'(\epsilon\bar{q}(t+1)) - \epsilon^2\frac{f_h''(\xi)}{2}\bar{u}^2}\\
      =&\ex{\epsilon^2\frac{f_h''(\epsilon\bar{q})}{2}\left((\sigma_a^{(\epsilon)})^2+\sigma_s^2\right) - \epsilon^2 f_h'(\epsilon\bar{q})} + \ex{\epsilon^4\frac{f_h''(\epsilon\bar{q})}{2}+ \epsilon^3\frac{f_h'''(\eta)}{6} \left(\bar{a}-\bar{s}\right)^3 + \epsilon \bar{u}f_h'(\epsilon\bar{q}(t+1)) - \epsilon^2\frac{f_h''(\xi)}{2}\bar{u}^2}
    \end{align*}
    where (a) follows from the fact that arrival and service are independent of each other and the queue length, together with $\ex{\bar{a}-\bar{s}} = -\epsilon$ and $\ex{(\bar{a}-\bar{s})^2} = (\sigma_a^{(\epsilon)})^2+\sigma_s^2+\epsilon^2$.

    Thus, if we let $\sigma^2 = \epsilon^2\left((\sigma_a^{\epsilon})^2+\sigma_s^2\right)$ and $\theta = \epsilon^2$ in Eq.~\eqref{eq:decomp}, we have (\textbf{Step 4:} Approximation)
    \begin{align*}
      \left|\ex{h(\epsilon\bar{q})} - \ex{h(Z)}\right| &= \left|\ex{\epsilon^3\frac{f_h'''(\eta)}{6} \left(\bar{a}-\bar{s}\right)^3 + \epsilon \bar{u}f_h'(\epsilon\bar{q}(t+1)) - \epsilon^2\frac{f_h''(\xi)}{2}\bar{u}^2} \right|\\
      &\le \underbrace{\ex{\left|\epsilon^4\frac{f_h''(\epsilon\bar{q})}{2}\right| + \left|\epsilon^3\frac{f_h'''(\eta)}{6} \left(\bar{a}-\bar{s}\right)^3\right| + \left|\epsilon^2\frac{f_h''(\xi)}{2}\bar{u}^2\right|}}_{\mathcal{T}_1} + \underbrace{\ex{\left|\epsilon \bar{u}f_h'(\epsilon\bar{q}(t+1))\right|}}_{\mathcal{T}_2}
    \end{align*}

    If $h$ is Lipschitz, the solution $f_h$ satisfies the gradient bounds in Lemma~\ref{lem:basis}. In particular, we consider the function class $h\in \text{Lip}(1)$. (\textbf{Step 4:} Gradient bounds) 

    For $\mathcal{T}_1$, we have
    \begin{align*}
      \mathcal{T}_1 &\le \epsilon^4\frac{\norm{f_h''}}{2} +\epsilon^3\frac{\norm{f_h'''}}{6} \ex{\bar{a}^3+\bar{s}^3 + 3\mu\left(\bar{a}^2+\bar{s}^2\right)} + \epsilon^2\frac{\norm{f_h''}}{2}\ex{\bar{u}^2}\\
      &\lep{a} \frac{1}{2}\epsilon^2 + \frac{2\epsilon}{3\left( (\sigma_a^{(\epsilon)})^2+\sigma_s^2\right)}\ex{\bar{a}^3+\bar{s}^3 + 3\mu\left(\bar{a}^2+\bar{s}^2\right)} + \frac{1}{2}\ex{\bar{u}^2}\\
      &\lep{b} K_1\epsilon + S_{max}\ex{\bar{u}}\\
      &\lep{c} K\epsilon
    \end{align*}
    where (a) follows from the gradient bounds in Lemma~\ref{lem:basis}; in (b) the constant $K_1$ follows from the fact that all the moments of arrival and service are bounded, and the fact that $u(t)\le s(t) < S_{max}$; (c) holds since $\ex{\bar{u}} = \epsilon$, which is obtained by setting the mean drift of $\bar{q}$ to be zero in steady-state, and $K = \max(K_1, S_{max})$.

    For $\mathcal{T}_2$, we have 
    \begin{align*}
      \mathcal{T}_2 &\ep{a} \ex{\left|\epsilon \bar{u}f_h'(\epsilon\bar{q}(t+1)) - \epsilon\bar{u}f_h'(0)\right|}\\
      &\ep{b}\ex{\left|\epsilon \bar{u}(t)f_h''(\zeta)\epsilon \bar{q}(t+1)\right|}\\
      &\ep{c}0
    \end{align*}
    where (a) holds since $f_h'(0) = 0$; (b) follows from the mean-value theorem; (c) is the result of $u(t)q(t+1) = 0$ for all $t$, which holds by the definition of $u(t)$.

    Thus, we have 
    \begin{align*}
       |\ex{h(\epsilon\bar{q})} - \ex{h(Z)}| &\le \mathcal{T}_1 + \mathcal{T}_2 \le K\epsilon
    \end{align*}
    which completes the proof of Theorem~\ref{thm:single}.
    \end{proof}

    \begin{remark}
      Note that the distance bound holds for any $\epsilon>0$. Moreover, except the gradient bounds, all other bounds are exactly the same as in the case of a quadratic test function in~\cite{eryilmaz2012asymptotically}. This nice property still holds in the proofs for load balancing and scheduling problems.
    \end{remark}

  \begin{lemma}
  \label{lem:basis}
    Let $f_h$ be the solution of the Stein equation given by Eq.~\eqref{eq:stein}. If $h$ is Lipschitz, we have 
    \begin{enumerate}[(a)]
      \item $|f_h'(x)| \le \frac{\sigma^2+2\theta x}{2\theta^2}\norm{h'}$
      \item $\norm{f_h''} \le \frac{\norm{h'}}{\theta}$
      \item $\norm{f_h'''} \le \frac{4\norm{h'}}{\sigma^2}$
    \end{enumerate}
  \end{lemma}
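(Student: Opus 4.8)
The plan is to solve the second-order linear ODE in Eq.~\eqref{eq:stein} explicitly. The Stein equation $\tfrac12\sigma^2 f_h''(x) - \theta f_h'(x) = h(x) - \ex{h(Z)}$ is a first-order linear ODE in the unknown $g := f_h'$, namely $\tfrac12\sigma^2 g'(x) - \theta g(x) = \tilde h(x)$ where $\tilde h(x) := h(x) - \ex{h(Z)}$. Multiplying by the integrating factor $e^{-2\theta x/\sigma^2}$ and integrating, one gets a closed-form expression for $f_h'$; the boundary condition $f_h'(0)=0$ pins down the constant of integration. Concretely I expect $f_h'(x) = -\tfrac{2}{\sigma^2}\, e^{2\theta x/\sigma^2}\!\int_x^\infty e^{-2\theta y/\sigma^2}\tilde h(y)\,dy$, and then $f_h''$ is obtained either by differentiating this or, more cleanly, by reading it off from the Stein equation itself as $f_h''(x) = \tfrac{2}{\sigma^2}\big(\tilde h(x) + \theta f_h'(x)\big)$.

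First I would record the elementary fact that $Z\sim\mathrm{Exp}(2\theta/\sigma^2)$ (using $\sigma^2 = \epsilon^2((\sigma_a^{(\epsilon)})^2+\sigma_s^2)$ and $\theta=\epsilon^2$, the rate matches the $Z$ in Theorem~\ref{thm:single}), so that $\ex{h(Z)} = \tfrac{2\theta}{\sigma^2}\int_0^\infty e^{-2\theta y/\sigma^2} h(y)\,dy$ and hence $\int_0^\infty e^{-2\theta y/\sigma^2}\tilde h(y)\,dy = 0$. This identity is what makes the integral representation of $f_h'$ well-defined and lets me rewrite $\int_x^\infty e^{-2\theta y/\sigma^2}\tilde h(y)\,dy = -\int_0^x e^{-2\theta y/\sigma^2}\tilde h(y)\,dy$, which is the more convenient form for small $x$. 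For part (b), $\norm{f_h''}\le\norm{h'}/\theta$: I would bound $|\tilde h(y)| = |h(y) - \ex{h(Z)}| = |\ex{h(y)-h(Z)}| \le \norm{h'}\,\ex{|y-Z|}$ and use the triangle-inequality bound $\ex{|y-Z|}$, then push this through the integral for $f_h'$ and combine with the Stein equation to control $f_h''$; the factor-of-$1/\theta$ should fall out of the exponential integral $\int e^{-2\theta y/\sigma^2}dy = \sigma^2/(2\theta)$ cancelling one power of $\sigma^2$.

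For part (c), $\norm{f_h'''}\le 4\norm{h'}/\sigma^2$: differentiate the Stein equation once to get $\tfrac12\sigma^2 f_h'''(x) = h'(x) + \theta f_h''(x)$, so $f_h'''(x) = \tfrac{2}{\sigma^2}\big(h'(x) + \theta f_h''(x)\big)$; then $\norm{f_h'''} \le \tfrac{2}{\sigma^2}\big(\norm{h'} + \theta\norm{f_h''}\big) \le \tfrac{2}{\sigma^2}(\norm{h'} + \norm{h'}) = 4\norm{h'}/\sigma^2$ using part (b). For part (a), the bound $|f_h'(x)|\le \tfrac{\sigma^2+2\theta x}{2\theta^2}\norm{h'}$ has an $x$-dependent right-hand side, so it cannot come from a sup-norm argument; instead I would estimate the integral representation directly, bounding $|\tilde h(y)|\le\norm{h'}\,\ex{|y-Z|}\le\norm{h'}(y+\ex{Z}) = \norm{h'}(y+\sigma^2/(2\theta))$ and integrating against $e^{-2\theta y/\sigma^2}$ over $[0,x]$ (or $[x,\infty)$), using standard moments of the exponential kernel. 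The main obstacle I anticipate is bookkeeping: getting the constants exactly right (the $4$ in (c), the precise affine form in (a)) requires care with which interval of integration is used and with the elementary inequalities for $\ex{|y-Z|}$, but there is no conceptual difficulty — it is all one explicit ODE solved by an integrating factor, plus moment estimates for an exponential density.
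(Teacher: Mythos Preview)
Your plan for (c) is correct and matches the paper exactly. Your plan for (a) is essentially right; with the crude bound $\ex{|y-Z|}\le y+\sigma^2/(2\theta)$ you obtain $|f_h'(x)|\le \norm{h'}(\sigma^2+\theta x)/\theta^2$, off by a factor~$2$ in the constant term, but if you use the exact value $\ex{|y-Z|}=y-\tfrac{\sigma^2}{2\theta}+\tfrac{\sigma^2}{\theta}e^{-2\theta y/\sigma^2}$ you recover the stated bound precisely.

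There is a genuine gap in your plan for (b). You propose to bound $|\tilde h(x)|$ and $|f_h'(x)|$ separately and then feed both into $f_h''(x)=\tfrac{2}{\sigma^2}\bigl(\tilde h(x)+\theta f_h'(x)\bigr)$. But both $|\tilde h(x)|$ and $|f_h'(x)|$ grow linearly in $x$, so this yields only an $x$-dependent estimate of the form $|f_h''(x)|\le \tfrac{2}{\sigma^2}\bigl(\norm{h'}(x+\tfrac{\sigma^2}{2\theta})+\theta\cdot\norm{h'}\tfrac{\sigma^2+2\theta x}{2\theta^2}\bigr)$, which diverges as $x\to\infty$ and cannot give a sup-norm bound. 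The cancellation needed to kill the linear growth is not visible at the level of absolute values.

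The paper's fix is exactly the idea you already use for (c), applied one level earlier: the differentiated Stein equation $\tfrac{1}{2}\sigma^2 f_h'''-\theta f_h''=h'$ is itself a first-order linear ODE for $f_h''$, with \emph{bounded} source $h'$. Solving it by the same integrating-factor trick gives the explicit representation
\[
f_h''(x)=-\frac{2}{\sigma^2}\,e^{2\theta x/\sigma^2}\int_x^\infty e^{-2\theta y/\sigma^2}\,h'(y)\,dy,
\]
from which $|f_h''(x)|\le \tfrac{2}{\sigma^2}\norm{h'}\cdot\tfrac{\sigma^2}{2\theta}=\norm{h'}/\theta$ is immediate. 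Equivalently, one integration by parts on your integral for $f_h'$ converts $\tilde h$ into $h'$ and produces the same formula. Either way, the missing step is to exploit that the \emph{derivative} $h'$ is bounded, not $\tilde h$ itself.
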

  \begin{proof}
    See Appendix~\ref{proof:lem1}
  \end{proof}

    From the proof of Theorem~\ref{thm:single}, we can see that it follows essentially the same procedures as summarized in~\cite{braverman2017stein}. They include Stein equation, generator coupling, Taylor expansion, approximation and gradient bounds. In the following, we present more details and insights behind each of them when applied to heavy-traffic analysis in general. 

    \textbf{Stein equation.} This is the cornerstone in Stein's method. The key intuition is from the following $\emph{characterizing equation}$ for an exponential distribution with mean of $\frac{\sigma^2}{2\theta}$. Specifically, suppose a random variable $Z \sim \text{Exp}(\frac{2\theta}{\sigma^2})$, i.e., with mean of $\frac{\sigma^2}{2\theta}$, then 
    \begin{align}
    \label{eq:chac}
      \ex{\frac{1}{2}\sigma^2f''(Z) -\theta f'(Z) + \theta f'(0)} = 0
    \end{align}
    holds for all functions $f: \mathbb{R}^+ \to \mathbb{R}$ with Lipschitz derivative. In fact, $Z \sim \text{Exp}(\frac{2\theta}{\sigma^2})$ can be viewed as the stationary distribution of a reflected Brownian motion (RBM) with a drift of $\theta$ and variance of $\sigma^2$. The corresponding generator is given by $Gf(x) = \frac{1}{2}\sigma^2f''(x) -\theta f'(x)$ for $x\ge 0$ and $f'(0) = 0$.   Thus, the motivation behind the choice of the particular Stein equation (i.e., Eq.~\eqref{eq:stein}) is that if the random variable $x$ approximates $Z$, then the RHS should be approximately zero. Meanwhile, by Eq.~\eqref{eq:chac} the LHS is also zero when $x$ is an exponential random variable with mean of $\frac{\sigma^2}{2\theta}$. As a result of Stein equation, bounding the distance between $x$ and $Z$ measured by $h$ is now equivalent to bounding the error when applying the generator $Gf(.)$ to the random variable $x$.

    \textbf{Generator coupling.} In this step, we couple the generator of the RBM and the generator of the single-server system. This is possible because in steady-state, the mean drift measured by the function $f_h$ is zero.

    \textbf{Taylor expansion.} In this step, we do the Taylor expansion on the generator of the single-server system in the hope that the expansion yields similar structures with the generator of the RBM.

    \textbf{Approximation and gradient bounds.} In this step, we carefully choose the parameter for the generator of RBM so that it matches some of the terms obtained by Taylor expansion. The difference between the two generators is now captured by terms that involve with the gradients of the solution to the Stein equation, which can be easily bounded by using the property of the solution $f_h$.

    We end this section by providing a more general result compared to Theorem~\ref{thm:single}. As mentioned before, we assume that both the arrival and service has finite support in Theorem~\ref{thm:single}. However, we can relax this assumption to the following light-tail assumption. In words, it says that all the moments of the arrival and service processes are bounded by constants that are independent of $\epsilon$.
    \begin{assumption}[Light-tail assumption]
      The arrival process $a(t)$ and service process $s(t)$ satisfy that 
      \begin{align*}
        \ex{e^{\theta_1 a(t)}} \le D_1 \text{  and } \ex{e^{\theta_2 s(t)}} \le D_2,
      \end{align*}
      for some constants $\theta_1 > 0$, $\theta_2>0$, $D_1<\infty$ and $D_2<\infty$ that are all independent of $\epsilon$.
    \end{assumption}

    \begin{theorem}
     Consider a single-server system that satisfies the light-tail assumption. Let $Z \sim \text{Exp}(\frac{2 }{(\sigma_a^{(\epsilon)})^2 + \sigma_s^2 })$, then 
      \begin{align*}
       d_W(\epsilon \bar{q}^{(\epsilon)},Z) = O(\epsilon\log{\frac{1}{\epsilon}}).
    \end{align*}
    \end{theorem}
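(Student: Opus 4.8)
The plan is to rerun the five-step template of the proof of Theorem~\ref{thm:single} almost verbatim, isolating the single place where the finite-support hypothesis was actually used. First I would check that under the light-tail assumption the single-server chain is still positive recurrent with all moments of $\bar q^{(\epsilon)}$ finite (standard: $\lambda^{(\epsilon)}<\mu$ together with light-tailed increments gives a geometric-type tail for $\bar q^{(\epsilon)}$), so that the generator-coupling identity (Step 2, equality (a), which combines the linear growth bound Lemma~\ref{lem:basis}(a) with finiteness of the moments of $\bar q$) remains valid. I would also record that the light-tail assumption yields moment bounds uniform in $\epsilon$: via $x^k\le (k/(e\theta))^k e^{\theta x}$ one gets $\ex{\bar a^k}\le (k/(e\theta_1))^k D_1$ and $\ex{\bar s^k}\le (k/(e\theta_2))^k D_2$, constants independent of $\epsilon$. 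With these in hand, Steps 1--3 (Stein equation, generator coupling, Taylor expansion) go through unchanged, and Step 4 again gives $|\ex{h(\epsilon\bar q)}-\ex{h(Z)}|\le \mathcal{T}_1+\mathcal{T}_2$ with $\mathcal{T}_2=0$ by the same reflection argument ($u(t)q(t+1)=0$ and $f_h'(0)=0$).

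The only term in $\mathcal{T}_1$ whose estimate must change is $\epsilon^2\frac{\norm{f_h''}}{2}\ex{\bar u^2}$, since the bound $\bar u\le s(t)<S_{max}$ used in Theorem~\ref{thm:single} is no longer available. The remaining pieces of $\mathcal{T}_1$ are still $O(\epsilon)$: $\epsilon^4\frac{\norm{f_h''}}{2}\le\frac12\epsilon^2$ exactly as before, and $\epsilon^3\frac{\norm{f_h'''}}{6}\ex{|\bar a-\bar s|^3}\le \frac{2\epsilon}{3\left((\sigma_a^{(\epsilon)})^2+\sigma_s^2\right)}\ex{|\bar a-\bar s|^3}$ is $O(\epsilon)$ because the third moment is uniformly bounded by the above and $(\sigma_a^{(\epsilon)})^2+\sigma_s^2$ stays bounded away from zero as $\epsilon\to 0$ (it converges to $\sigma_a^2+\sigma_s^2$). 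Using Lemma~\ref{lem:basis}(b) with $\theta=\epsilon^2$ and $h\in\text{Lip}(1)$, the problematic term is just $\frac12\ex{\bar u^2}$, so everything reduces to estimating $\ex{\bar u^2}$.

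It remains to show $\ex{\bar u^2}=O(\epsilon\log\tfrac1\epsilon)$, which is the crux. Here I would truncate: since $\bar u=\max(\bar s-\bar a-\bar q,0)\le\bar s$, for any level $M>0$,
\begin{align*}
  \ex{\bar u^2} &\le M\,\ex{\bar u\,\mathbbm{1}\{\bar s\le M\}} + \ex{\bar s^2\,\mathbbm{1}\{\bar s> M\}}\\
  &\le M\,\ex{\bar u} + \ex{\bar s^2\,\mathbbm{1}\{\bar s> M\}}.
\end{align*}
The first term equals $M\epsilon$ because $\ex{\bar u}=\epsilon$ (set the mean drift of $\bar q$ to zero in steady state), and the second decays exponentially in $M$: using $\mathbbm{1}\{\bar s>M\}\le e^{(\theta_2/2)(\bar s-M)}$ and $\bar s^2 e^{(\theta_2/2)\bar s}\le C e^{\theta_2\bar s}$ for a finite constant $C$ independent of $\epsilon$, one gets $\ex{\bar s^2\,\mathbbm{1}\{\bar s> M\}}\le C D_2\, e^{-(\theta_2/2)M}$. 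Choosing $M=\frac{2}{\theta_2}\log\frac1\epsilon$ balances the two contributions and yields $\ex{\bar u^2}\le \frac{2}{\theta_2}\epsilon\log\frac1\epsilon + CD_2\,\epsilon = O(\epsilon\log\tfrac1\epsilon)$. Hence $\mathcal{T}_1=O(\epsilon\log\tfrac1\epsilon)$ and $d_W(\epsilon\bar q^{(\epsilon)},Z)=O(\epsilon\log\tfrac1\epsilon)$. The main obstacle is exactly this $\ex{\bar u^2}$ estimate: without bounded support one cannot absorb $\bar u$ directly into $\ex{\bar u}=\epsilon$, and the truncation-plus-optimization is precisely what costs the extra $\log\frac1\epsilon$ factor; all other steps are a transcription of the proof of Theorem~\ref{thm:single}.
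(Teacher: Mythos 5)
Your proposal is correct and follows essentially the same route as the paper: both reduce to estimating $\ex{\bar u^2}$, both truncate via $\bar u^2\le \bar u\,\bar s\,\mathbbm{1}\{\bar s\le M\}+\bar s^2\mathbbm{1}\{\bar s>M\}\le M\bar u+\bar s^2\mathbbm{1}\{\bar s>M\}$, and both optimize the truncation level $M\sim\theta_2^{-1}\log\tfrac1\epsilon$ to balance $M\epsilon$ against the exponential tail, yielding the $\epsilon\log\tfrac1\epsilon$ rate. The only cosmetic difference is the tail estimate: the paper bounds $\ex{\bar s^2\mathbbm{1}\{\bar s>M\}}$ by Cauchy--Schwarz plus a Chernoff bound on $\mathbb{P}(\bar s>M)$, whereas you absorb $\bar s^2$ directly into $e^{\theta_2\bar s}$; both give the same $e^{-(\theta_2/2)M}$ decay.
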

    \begin{proof}
      As in the proof of Theorem~\ref{thm:single}, we need only focus on the two terms $\mathcal{T}_1$ and $\mathcal{T}_2$. For $\mathcal{T}_2$, it is still zero as before. 

      For $\mathcal{T}_1$, since the light-tail assumption implies that all the moments are bounded, we have  
      \begin{align*}
        \mathcal{T}_1 \le O(\epsilon) + \frac{1}{2}\ex{\bar{u}^2}.
      \end{align*}
      Now, we need a careful analysis of the second term $\ex{\bar{u}^2}$ since $\bar{u}$ is no longer upper bounded by $S_{max}$. First, note that for any $t$ and a constant $s'$,
    \begin{align*}
      u^2(t) &\le u(t)s(t)\\
      & = u(t)s(t)\mathcal{I}(s(t) \le s') + u(t)s(t)\mathcal{I}(s(t) > s')\\
      &\le u(t)s' + s^2(t)\mathcal{I}(s(t) > s').
    \end{align*}
    Thus, 
    \begin{align*}
      \ex{\bar{u}^2} &\le \epsilon s' + \ex{\bar{s}^2\mathcal{I}(\bar{s} > s')}\\
      &\le \epsilon s' + \sqrt{\ex{\bar{s}^4}}\sqrt{\mathbb{P}(\bar{s} > s')}\\
      &\le \epsilon s' + \sqrt{\ex{\bar{s}^4}} \sqrt{\frac{D_2}{e^{\theta_2 s'}}}.
    \end{align*}
    Let $s' = \frac{\log(D_2/\epsilon^2)}{\theta_2}$, we have $\ex{\bar{u}^2} = O(\epsilon\log{\frac{1}{\epsilon}})$. Therefore, $d_W(\epsilon \bar{q}^{(\epsilon)},Z) \le O(\epsilon\log{\frac{1}{\epsilon}})$.
    \end{proof}
\begin{remark}
  The same trick can also be applied to analyze the convergence for load balancing and scheduling problem in heavy traffic with light-tailed distributions.
\end{remark}

\section{Stein's method for the load balancing problem}
\label{sec:lb}
In this section, we will apply the template of Stein's method developed in the last section to analyze load balancing problems in heavy traffic. We will see that the proof of the main result follows nearly the same pattern as in the single-server case. In particular, compared to the single-server system, there is only one additional error term, which can often be bounded by using the condition of $\emph{state space collapse}$.

Let $A_{\Sigma}(t)$ denote denote the number of exogenous tasks that arrive at the beginning of time-slot $t$.  We assume that $A_{\Sigma}(t)$ is an integer-valued random variable with mean of $\lambda_{\Sigma}$, which is i.i.d. across time-slots. We further assume that there is a positive probability for $A_{\Sigma}(t)$ to be zero. We assume that $S_n(t)$
is also an integer-valued random variable with mean $\mu_n$, which is i.i.d. across time-slots. We also assume that $S_n(t)$ is independent across different servers as well as the arrival process. Let $S_{\Sigma}(t) \triangleq \sum_{n=1}^N S_n(t)$ denote the hypothetical total service process with mean of $\mu_{\Sigma} \triangleq \sum_{n=1}^N \mu_n$.

We consider a set of load balancing systems parameterized by $\epsilon$ such that $\lambda_{\Sigma}^{(\epsilon)} = \mu_{\Sigma} - \epsilon$. In particular, we have $\lambda_{\Sigma}^{(\epsilon)} = \ex{\overline{A}_{\Sigma}}$, $(\sigma_{\Sigma}^{(\epsilon)})^2 = \text{Var}(\overline{A}_{\Sigma})$, $\mu_{\Sigma} = \ex{\overline{S}_{\Sigma}}$ and $\nu_{\Sigma}^2 = \text{Var}({\overline{S}_{\Sigma}})$. A load balancing policy is adopted by the dispatcher to determine to which queue the new arrivals should be sent.

\begin{theorem}
\label{thm:lb}
    Consider a set of load balancing systems parameterized by $\epsilon$. Suppose that the load balancing policy is throughput optimal and there exists a function $g(\epsilon)$ such that 
    \begin{align}
    \label{eq:cross}
      \ex{\norms{\overline{\Q}^{(\epsilon)}(t+1)}_1\norms{\overline{\U}^{(\epsilon)}}_1} = O(g(\epsilon)).
    \end{align}
    Then, we have 
    \begin{align*}
       d_W(\epsilon \sum_{n=1}^N\overline{Q}_n^{(\epsilon)},Z) = O(\max(g(\epsilon),\epsilon)).
    \end{align*}
    where $Z \sim \text{Exp}(\frac{ (\sigma_{\Sigma}^{(\epsilon)})^2 + \nu_{\Sigma}^2 }{2})$. 
    
  \end{theorem}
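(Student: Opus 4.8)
Proof proposal.

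\medskip

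The plan is to rerun, almost verbatim, the five-step template of Theorem~\ref{thm:single}, now on the scalar quantity $\epsilon\norms{\overline{\Q}}_1 = \epsilon\sum_{n=1}^N\overline{Q}_n^{(\epsilon)}$. In Step~1 I take the Stein equation~\eqref{eq:stein} with $f_h'(0)=0$, substitute $x=\epsilon\norms{\overline{\Q}}_1$, and take expectations, so that it suffices to bound $\ex{\tfrac12\sigma^2 f_h''(\epsilon\norms{\overline{\Q}}_1) - \theta f_h'(\epsilon\norms{\overline{\Q}}_1)}$ uniformly over $h\in\text{Lip}(1)$, where $\sigma^2$ and $\theta$ are to be fixed so the Stein exponential has the mean of $Z$. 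In Step~2, throughput optimality gives that $\overline{\Q}^{(\epsilon)}$ is in steady state with all moments finite, so Lemma~\ref{lem:basis}(a) makes the mean drift $\ex{f_h(\epsilon\norms{\overline{\Q}(t+1)}_1) - f_h(\epsilon\norms{\overline{\Q}(t)}_1)}$ vanish; I subtract it, exactly as in~\eqref{eq:decomp}.

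For Step~3 I sum the per-queue dynamics, using $\sum_n A_n(t)=A_\Sigma(t)$, to get $\norms{\overline{\Q}(t+1)}_1 = \norms{\overline{\Q}(t)}_1 + \overline{A}_\Sigma - \overline{S}_\Sigma + \norms{\overline{\U}}_1$, which is structurally the single-server recursion with $\bar a-\bar s$ replaced by $\overline{A}_\Sigma-\overline{S}_\Sigma$ and $\bar u$ by $\norms{\overline{\U}}_1$. I split off the unused-service increment, Taylor-expand the $\overline{A}_\Sigma-\overline{S}_\Sigma$ part to third order (remainder at $\eta$) and the $\norms{\overline{\U}}_1$ part to second order (remainder at $\xi$), and use independence of $(\overline{A}_\Sigma,\overline{S}_\Sigma)$ from the queue lengths together with $\ex{\overline{A}_\Sigma-\overline{S}_\Sigma}=-\epsilon$ and $\ex{(\overline{A}_\Sigma-\overline{S}_\Sigma)^2}=(\sigma_\Sigma^{(\epsilon)})^2+\nu_\Sigma^2+\epsilon^2$. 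Choosing (Step~4) $\sigma^2=\epsilon^2\big((\sigma_\Sigma^{(\epsilon)})^2+\nu_\Sigma^2\big)$ and $\theta=\epsilon^2$ cancels the leading Taylor terms against the generator and leaves
\begin{align*}
  \big|\ex{h(\epsilon\norms{\overline{\Q}}_1)}-\ex{h(Z)}\big| &\le \mathcal{T}_1 + \mathcal{T}_2, \\
  \mathcal{T}_1 &= \ex{\tfrac{\epsilon^4}{2}\big|f_h''(\epsilon\norms{\overline{\Q}}_1)\big| + \tfrac{\epsilon^3}{6}\big|f_h'''(\eta)\big|\,|\overline{A}_\Sigma-\overline{S}_\Sigma|^3 + \tfrac{\epsilon^2}{2}\big|f_h''(\xi)\big|\,\norms{\overline{\U}}_1^2}, \\
  \mathcal{T}_2 &= \ex{\epsilon\,\norms{\overline{\U}}_1\,\big|f_h'(\epsilon\norms{\overline{\Q}(t+1)}_1)\big|}.
\end{align*}

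In Step~5, $\mathcal{T}_1=O(\epsilon)$ follows exactly as in Theorem~\ref{thm:single}: Lemma~\ref{lem:basis}(b),(c) give $\norms{f_h''}\le 1/\epsilon^2$ and $\norms{f_h'''}\le 4/(\epsilon^2((\sigma_\Sigma^{(\epsilon)})^2+\nu_\Sigma^2))$ for $h\in\text{Lip}(1)$, so the three pieces are $O(\epsilon^2)$, $O(\epsilon)$ (boundedness of all moments of $\overline{A}_\Sigma,\overline{S}_\Sigma$), and $\tfrac12\ex{\norms{\overline{\U}}_1^2}\le \tfrac12\ex{\overline{S}_\Sigma\norms{\overline{\U}}_1}\le c\,\ex{\norms{\overline{\U}}_1}=c\,\epsilon$ respectively, using $\norms{\overline{\U}}_1\le \overline{S}_\Sigma$ bounded and $\ex{\norms{\overline{\U}}_1}=\epsilon$ from zero drift of $\norms{\overline{\Q}}_1$ in steady state. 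The only genuinely new term is $\mathcal{T}_2$, which in the single-server case was zero because $u(t)q(t+1)=0$; here only the coordinatewise identity $U_n(t)Q_n(t+1)=0$ holds, so the product $\norms{\overline{\U}}_1\norms{\overline{\Q}(t+1)}_1$ retains its cross terms and does not vanish. I instead invoke $f_h'(0)=0$ and the mean value theorem to write $f_h'(\epsilon\norms{\overline{\Q}(t+1)}_1)=\epsilon\norms{\overline{\Q}(t+1)}_1 f_h''(\zeta)$, so that $\mathcal{T}_2\le \epsilon^2\norms{f_h''}\,\ex{\norms{\overline{\Q}(t+1)}_1\norms{\overline{\U}}_1}\le \ex{\norms{\overline{\Q}(t+1)}_1\norms{\overline{\U}}_1}$, which is $O(g(\epsilon))$ by the hypothesis~\eqref{eq:cross}. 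Combining the two bounds gives $d_W\big(\epsilon\sum_n\overline{Q}_n^{(\epsilon)},Z\big)\le \mathcal{T}_1+\mathcal{T}_2=O(\max(g(\epsilon),\epsilon))$.

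The main obstacle is precisely $\mathcal{T}_2$: every other estimate is a literal transcription of the single-server computation (equivalently, of the quadratic-test-function bounds of~\cite{eryilmaz2012asymptotically}), but the reflection identity no longer annihilates $\mathcal{T}_2$ in the multi-queue setting, and it has to be absorbed into the cross-type quantity $\ex{\norms{\overline{\Q}(t+1)}_1\norms{\overline{\U}}_1}$ --- which is exactly why~\eqref{eq:cross} appears as a hypothesis rather than being proved here; for a concrete policy such as JSQ one would separately establish state space collapse to obtain $g(\epsilon)=O(\epsilon)$ and hence the rate $O(\epsilon)$. A secondary point worth flagging is that the clean forms $\norms{f_h''}\le 1/\epsilon^2$ and $\norms{f_h'''}=O(1/\epsilon^2)$ require $\norms{h'}\le 1$ (true on $\text{Lip}(1)$) and the non-degeneracy of $(\sigma_\Sigma^{(\epsilon)})^2+\nu_\Sigma^2$ (bounded away from $0$), both of which I take to carry over as standing assumptions.
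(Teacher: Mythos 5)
Your proposal is correct and reproduces the paper's argument essentially line for line: the same substitution into the Stein equation, the same reduction via summed dynamics $\norms{\overline{\Q}(t+1)}_1=\norms{\overline{\Q}(t)}_1+\overline{A}_\Sigma-\overline{S}_\Sigma+\norms{\overline{\U}}_1$, the same choice $\sigma^2=\epsilon^2((\sigma_\Sigma^{(\epsilon)})^2+\nu_\Sigma^2)$ and $\theta=\epsilon^2$, the same $\mathcal{T}_1=O(\epsilon)$ estimate via gradient bounds, $\norms{\overline{\U}}_1\le NS_{\max}$ and $\ex{\norms{\overline{\U}}_1}=\epsilon$, and the same treatment of $\mathcal{T}_2$ by $f_h'(0)=0$, the mean value theorem, and $\norms{f_h''}\le 1/\epsilon^2$, yielding $\mathcal{T}_2\le\ex{\norms{\overline{\Q}(t+1)}_1\norms{\overline{\U}}_1}=O(g(\epsilon))$. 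Your observation that the single-server reflection identity $u(t)q(t+1)=0$ fails for $\norms{\overline{\U}}_1\norms{\overline{\Q}(t+1)}_1$, and that this is exactly why the hypothesis~\eqref{eq:cross} is needed, matches the role the paper assigns to state-space collapse in its remark after the theorem.
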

  \begin{proof}
    Replace $x$ in the Stein equation (i.e., Eq.~\eqref{eq:stein}) by $\epsilon\norms{\overline{\Q}^{(\epsilon)}}_1$ and take expectation of both sides, we have 
    \begin{align}
    \label{eq:stein_routing}
      \left|\ex{h(\epsilon\norms{\overline{\Q}^{(\epsilon)}}_1)} - \ex{h(Z)}\right| = \left|\ex{\frac{1}{2}\sigma^2 f_h''\left(\epsilon\norms{\overline{\Q}^{(\epsilon)}}_1\right)-\theta f_h'\left(\epsilon\norms{\overline{\Q}^{(\epsilon)}}_1\right)}\right|
    \end{align}
    Now, we focus on the RHS. In particular, we have
    \begin{align*}
      &\ex{\frac{1}{2}\sigma^2 f_h''\left(\epsilon\norms{\overline{\Q}^{(\epsilon)}}_1\right)-\theta f_h'\left(\epsilon\norms{\overline{\Q}^{(\epsilon)}}_1\right)}\\
      \ep{a}&\ex{\frac{1}{2}\sigma^2 f_h''\left(\epsilon\norms{\overline{\Q}^{(\epsilon)}}_1\right)-\theta f_h'\left(\epsilon\norms{\overline{\Q}^{(\epsilon)}}_1\right) - \left(f_h\left(\epsilon\norms{\overline{\Q}^{(\epsilon)}(t+1)}_1\right) -f_h\left(\epsilon\norms{\overline{\Q}^{(\epsilon)}(t)}_1\right) \right)}\\
      =&\ex{\frac{1}{2}\sigma^2 f_h''\left(\epsilon\norms{\overline{\Q}}_1\right)-\theta f_h'\left(\epsilon\norms{\overline{\Q}}_1\right)} - \ex{f_h\left(\epsilon(\norms{\overline{\Q}(t)}_1 + \norms{\overline{\A}(t)}_1 - \norms{\overline{\s}(t)}_1 + \norms{\overline{\U}(t)}_1)\right) - f_h\left(\epsilon\norms{\overline{\Q}}_1\right)}
    \end{align*}
    where (a) holds since the policy is throughput optimal and the result (a) in Lemma~\ref{lem:basis}.

    For the second expectation, we can follow exactly the same argument as in the single-server case (i.e., replace the scalar by the $1$-norm of corresponding vectors) and obtain that
    \begin{align*}
      &\ex{f_h\left(\epsilon(\norms{\overline{\Q}(t)}_1 + \norms{\overline{\A}(t)}_1 - \norms{\overline{\s}(t)}_1 + \norms{\overline{\U}(t)}_1)\right) - f_h\left(\epsilon\norms{\overline{\Q}}_1\right)}\\
      =&{\ex{\epsilon^2\frac{f_h''(\epsilon\norms{\overline{\Q}}_1)}{2}\left(\norms{\overline{\A}}_1-\norms{\overline{\s}}_1 \right)^2 + \epsilon f_h'(\epsilon \norms{\overline{\Q}}_1)\left(\norms{\overline{\A}}_1-\norms{\overline{\s}}_1\right)}} \\
      &+\ex{\epsilon^3\frac{f_h'''(\eta)}{6} \left(\norms{\overline{\A}}_1-\norms{\overline{\s}}_1\right)^3 + \epsilon\norms{\overline{\U}}_1f_h'(\epsilon\norms{\overline{\Q}(t+1)}_1) -\epsilon^2\frac{f_h''(\xi)}{2}\norms{\overline{\U}}_1^2 }\\
      =&\ex{\epsilon^2\frac{f_h''(\epsilon\norms{\overline{\Q}}_1)}{2}\left( (\sigma_{\Sigma}^{(\epsilon)})^2 + \nu_{\Sigma}^2+ \epsilon^2\right) -\epsilon^2f_h'(\epsilon\norms{\overline{\Q}}_1)}\\
      &+\ex{\epsilon^4\frac{f_h''(\epsilon\norms{\overline{\Q}}_1)}{2} + \epsilon^3\frac{f_h'''(\eta)}{6} \left(\norms{\overline{\A}}_1-\norms{\overline{\s}}_1\right)^3 + \epsilon\norms{\overline{\U}}_1f_h'(\epsilon\norms{\overline{\Q}(t+1)}_1) -\epsilon^2\frac{f_h''(\xi)}{2}\norms{\overline{\U}}_1^2 }
      % &+ \underbrace{\ex{\epsilon f_h'(\epsilon \norms{\overline{\Q}}_1)\norms{\overline{\U}}_1 + \epsilon^2f_h''(\epsilon \norms{\overline{\Q}}_1)\norms{\overline{\U}}_1\left(\norms{\overline{\A}}_1-\norms{\overline{\s}}_1\right) + \epsilon^2\frac{f_h''(\epsilon \norms{\overline{\Q}}_1)}{2}\norms{\overline{\U}}_1^2}}_{\mathcal{T}_2} \\
      % &+ \underbrace{\ex{\epsilon^3\frac{f_h'''(\eta)}{6} \left(\norms{\overline{\A}}_1-\norms{\overline{\s}}_1+\norms{\overline{\U}}_1\right)^3}}_{\mathcal{T}_3}
    \end{align*}
    % For $\mathcal{T}_1$, we have 
    % \begin{align}
    %   \mathcal{T}_1 = \ex{\epsilon^2\frac{f_h''(\epsilon\norms{\overline{\Q}}_1)}{2}\left(\sigma_{\Sigma}^2 + \nu_{\Sigma}^2+ \epsilon^2\right) -\epsilon^2f_h'(\epsilon\norms{\overline{\Q}}_1)}
    % \end{align}
    % For $\mathcal{T}_2$, we have
    % \begin{align}
    %   \mathcal{T}_2 &\lep{a} {\ex{\epsilon^2 \norm{f_h''}\norms{\overline{\Q}}_1\norms{\overline{\U}}_1 + \epsilon^2\norm{f_h''}\norms{\overline{\U}}_1\left(\norms{\overline{\A}}_1-\norms{\overline{\s}}_1\right) + \epsilon^2\frac{\norm{f_h''}}{2}\norms{\overline{\U}}_1^2}}\\
    %   &=\ex{\epsilon^2\norm{f_h''}\norms{\overline{\U}}_1\left(\norms{\overline{\Q}}_1+\norms{\overline{\A}}_1-\norms{\overline{\s}}_1 + \norms{\overline{\U}}_1 - \norms{\overline{\U}}_1 \right)+\epsilon^2\frac{\norm{f_h''}}{2}\norms{\overline{\U}}_1^2}\\
    %   &=\ex{\epsilon^2\norm{f_h''}\norms{\overline{\U}(t)}_1\norms{\overline{\Q}(t+1)}_1- \epsilon^2\frac{\norm{f_h''}}{2}\norms{\overline{\U}}_1^2}
    % \end{align}
    % For $\mathcal{T}_3$, we have 
    % \begin{align}
    %   \mathcal{T}_3\le \ex{\epsilon^3\frac{\norm{f_h'''}}{6}\left(\norms{\overline{\A}}_1^3 + 3\norms{\overline{\s}}_1^3\right)}
    % \end{align}
    Now, let $\sigma^2 =\epsilon^2\left(\sigma_{\Sigma}^2 + \nu_{\Sigma}^2\right)$ and $\theta = \epsilon^2$ in Eq.~\eqref{eq:stein_routing}, we have 
    \begin{align*}
      \left|\ex{h(\epsilon\norms{\overline{\Q}^{(\epsilon)}}_1)} - \ex{h(Z)}\right| &\le \underbrace{\ex{\left|\epsilon^3\frac{f_h'''(\eta)}{6} \left(\norms{\overline{\A}}_1-\norms{\overline{\s}}_1\right)^3\right| + \left|\epsilon^2\frac{f_h''(\xi)}{2}\norms{\overline{\U}}_1^2\right| + \left|\epsilon^4\frac{f_h''(\epsilon\norms{\overline{\Q}}_1)}{2}\right|  }}_{\mathcal{T}_1}\\
      &+ \underbrace{\ex{\left|\epsilon\norms{\overline{\U}}_1f_h'(\epsilon\norms{\overline{\Q}(t+1)}_1)\right|}}_{\mathcal{T}_2}\nonumber
    \end{align*}

    For $\mathcal{T}_1$, we have 
    \begin{align*}
      \mathcal{T}_1 &\le \epsilon^3\frac{\norm{f_h'''}}{6}\ex{\overline{A}_{\Sigma}^3 + \overline{S}_{\Sigma}^3 + 3\mu_{\Sigma}(\overline{A}_{\Sigma}^2 + \overline{S}_{\Sigma}^2)} + \epsilon^2\frac{\norm{f_h''}}{2}\ex{\norms{\overline{\U}}_1^2} + \epsilon^4 \frac{\norm{f_h''}}{2}\\
      &\le \frac{2\epsilon}{3\left(\sigma_{\Sigma}^2 + \nu_{\Sigma}^2\right)}\ex{\overline{A}_{\Sigma}^3 + \overline{S}_{\Sigma}^3 + 3\mu_{\Sigma}(\overline{A}_{\Sigma}^2 + \overline{S}_{\Sigma}^2)} + \frac{1}{2}\ex{\norms{\overline{\U}}_1^2} + \frac{1}{2}\epsilon^2\\
      &\le O(\epsilon) + NS_{max}\ex{\norms{\overline{\U}}_1}\\
      &= O(\epsilon)
    \end{align*}

    For $\mathcal{T}_2$, we have
    \begin{align*}
      \mathcal{T}_2 &= \ex{\left|\epsilon\norms{\overline{\U}}_1f_h'(\epsilon\norms{\overline{\Q}(t+1)}_1) - \epsilon\norms{\overline{\U}}_1f_h'(0)\right|}\\
      &=\ex{\left|\epsilon^2\norms{\overline{\Q}(t+1)}_1\norms{\overline{\U}}_1f_h''(\zeta)\right|}\\
      &\le \ex{\norms{\overline{\Q}(t+1)}_1\norms{\overline{\U}}_1}\\
      & = O(g(\epsilon))
      % &=\ex{\inner{\overline{\U}}{-N\overline{\Q}_{\perp}^+}}\\
      % &\le N\left(\ex{\norm{\overline{\U}}_{r'}^{r'} }\right)^{\frac{1}{r'}}\left(\ex{\norm{\overline{\Q}_{\perp}^+}_{r}^{r} }\right)^{\frac{1}{r}}\\
      % &\le N\left(\ex{\norm{\overline{\U}}_{r'}^{r'} }\right)^{\frac{1}{r'}}\left(\ex{\norm{\overline{\Q}_{\perp}}_{2}^{r} }\right)^{\frac{1}{r}}
    \end{align*}
    Thus, we have 
    \begin{align*}
       \left|\ex{h(\epsilon\norms{\overline{\Q}^{(\epsilon)}}_1)} - \ex{h(Z)}\right| \le \mathcal{T}_1 + \mathcal{T}_2 = O(\max(g(\epsilon), \epsilon )),
    \end{align*}
    which completes the proof of Theorem~\ref{thm:lb}.
  \end{proof}
  \begin{remark}
    Note that Theorem~\ref{thm:lb} identifies the key term in studying the steady-state distribution for load balancing problems (i.e., Eq.~\eqref{eq:cross}). This term is often analyzed with the help of state-space collapse result. Specifically, the state-space collapse result states that for any $\epsilon \in (0,\epsilon_0)$, the system state will concentrate around a subspace, either a one-dimensional subspace~\cite{eryilmaz2012asymptotically,zhou2017designing} or a multi-dimensional subspace~\cite{zhou2018flexible}. In this case, it is also possible to obtain a universal bound for any $\epsilon \in (0,\epsilon_0)$. Furthermore, Theorem~\ref{thm:lb} also establishes a collection between the convergence rate of steady-state distribution with a new metric on load balancing policy developed in~\cite{zhou2018degree}.
  \end{remark}

\section{Stein's method for the scheduling problem}
\label{sec:sche}
In this section, we apply the template of Stein's method developed in the single-server case to analyze the scheduling problem. As before, similar patterns occur in the proof even though there are some additional terms to be bounded, which follow the bounds obtained as in the drift method~\cite{eryilmaz2012asymptotically}.

The goal of the scheduling problem is to select an instantaneous service rate vector $\s(t)$ at each time-slot, subject to feasibility constraints. Generally, let $\mathbf{\mathcal{S}}$ denote the set of feasible service rate vectors, and in each time-slot a particular service vector $\s \in \mathbf{\mathcal{S}}$ is selected, which is assumed to be non-negative integer-valued and bounded with $S_n \le S_{max} < \infty$. We assume that the arrival processes to different queues are independent and the sequence of $A_n(t)$ is $\emph{i.i.d}$, non-negative integer valued and bounded random variables with $A_n(t) \le A_{max} <\infty$. For the service process of each server $n$, the mean is $\mu_n$ and variance is $\nu_n^2$. The arrival process for each queue has mean $\lambda_{n}$ and variance of $\sigma_n^2$. Let $\pmb{\lambda} = (\lambda_n)_n$ and $\pmb{\sigma^2} = (\sigma_n^2)_n$ denote the vector for the mean and variance of the arrival process, and $\pmb{\mu} = (\mu_n)_n$ and $\pmb{\nu^2} = (\nu_n^2)_n$ for the service process.

Given the set of feasible service rate vectors $\mathbf{\mathcal{S}}$, the capacity region $\mathcal{R}$ is the convex hull of $\mathbf{\mathcal{S}}$ given by 
\begin{align*}
  \mathcal{R} = \text{Convex Hull}(\mathbf{\mathcal{S}}).
\end{align*}
By the nonnegative nature and finiteness of the set $\mathbf{\mathcal{S}}$, the capacity region $\mathcal{R}$ becomes a polyhedron, which can be described by 
\begin{align*}
  \mathcal{R} = \{\mathbf{r} \ge 0: \inner{\ck}{\mathbf{r}} \le b^{(k)}, k=1,2,\ldots,K \},
\end{align*}
where $K$ is the finite total number of hyperplanes that determine the polyhedron. For each hyperplane $\mathcal{H}^{(k)}$, it is characterized by its normal vector $\ck \in \mathbb{R}^N$ and the inner product value $b^{(k)}$. The intersection of the $k$th hyperplane with the capacity region is called the $k$th face of $\mathcal{R}$, given by 
\begin{align*}
  \mathcal{F}^{(k)} \triangleq \{\mathbf{r} \in \mathcal{R}: \inner{\ck}{\mathbf{r}} = b^{(k)}\}.
\end{align*}
Throughout this section, we fix a particular $\mathcal{F}^{(k)}$ and a point $\pmb{\lambda}^{(k)} \in \text{Relint} (\mathbf{F}^{(k)})$, where $\text{Relint} (\mathbf{F}^{(k)})$ denotes the relative interior of the polyhedral set $\mathcal{F}^{(k)}$.

As before, we consider a set of systems parameterized by $\epsilon$ such that the arrival vector $\pmb{\lambda}^{(\epsilon)}$ satisfies
\begin{align}
\label{eq:epsilon_schedule}
  \pmb{\lambda}^{(\epsilon)} \triangleq \pmb{\lambda}^{(k)} - \epsilon\ck,
\end{align}
which means that $\pmb{\lambda}^{(\epsilon)} \in \text{Int}(\mathcal{R})$. In words, $\pmb{\lambda}^{(\epsilon)}$ is a stabilizable rate in the capacity region that is at a distance $\epsilon$ away from the $k$th face $\mathcal{F}^{(k)}$.

To demonstrate the key idea, we consider the well-known scheduling policy called MaxWeight defined as 
\begin{align*}
  \s(t) = \text{RAND}\left\{ \arg\max_{\s \in \mathcal{S}} \inner{\Q(t)}{\s} \right\}.
\end{align*}
% It has been shown that MaxWeight is throughput optimal and enjoys the following strong version of state-space collapse~\cite{eryilmaz2012asymptotically}.
% \begin{definition}
%   Consider a scheduling policy. Let $\Q_{\parallel}^{(\epsilon,k)}$ be the projection of queue length vector $\Q^{(\epsilon)}$ onto the vector $\ck$ and $\Q_{\perp}^{(\epsilon, k)} \triangleq \Q - \Q_{\parallel}^{(\epsilon,k)}$. The policy is said to satisfy state space collapse if there exist finite constants $\{M_r^{(k)}\}_{r = 1,2,\ldots}$, independent of $\epsilon$, such that 
%   \begin{align}
%   \label{eq:ssc}
%     \ex{ \norm{\overline{\Q}_{\perp}^{(\epsilon,k)}}^r } \le M_r^{(k)},
%   \end{align}
%   for all $\epsilon > 0$ and any $r = 1,2,\ldots$.
% \end{definition}

It has been shown that MaxWeight is throughput optimal and enjoys state-space collapse in~\cite{eryilmaz2012asymptotically}. In the following, we restate the state-space collapse result in~\cite{eryilmaz2012asymptotically} with a more precise statement on the constants, which is useful for establishing our main result.
\begin{proposition}
\label{prop:ssc}
For the fixed face  $\mathcal{F}^{(k)}$ and the point $\pmb{\lambda}^{(k)} \in \text{Relint} (\mathcal{F}^{(k)})$, consider the MaxWeight scheduling policy with arrival vector defined in Eq.~\eqref{eq:epsilon_schedule}. Let $\Q_{\parallel}^{(\epsilon,k)}$ be the projection of queue length vector $\Q^{(\epsilon)}$ onto the vector $\ck$ and $\Q_{\perp}^{(\epsilon, k)} \triangleq \Q - \Q_{\parallel}^{(\epsilon,k)}$. Then, there exist finite constants $\{M_r^{(k)}\}_{r = 1,2,\ldots}$, independent of $\epsilon$, such that 
  \begin{align}
  \label{eq:ssc}
    \ex{ \norm{\overline{\Q}_{\perp}^{(\epsilon,k)}}^r } \le M_r^{(k)},
  \end{align}
  for all $\epsilon > 0$ and any $r = 1,2,\ldots$. Moreover, $M_r^{(k)}$ is upper bounded as follows
  \begin{align}
  \label{eq:ssc_bound}
    M_r^{(k)} \le V_r^{(k)}r^{r+\frac{1}{2}}e^{1-r},
  \end{align}
  where $V_r^{(k)}$ is a constant given by $V_r^{(k)} = \left(\frac{4L}{\delta^{(k)}} + \frac{8D^2 + 4D\delta^{(k)}}{\delta^{(k)}}\right)^r$, $L = N\max(A_{max},S_{max})^2$, $D = \sqrt{N}\max(A_{max},S_{max})$ and $\delta^{(k)}$ is a constant determined by the face  $\mathcal{F}^{(k)}$ and $\pmb{\lambda}^{(k)}$.
\end{proposition}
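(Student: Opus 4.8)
Proposition~\ref{prop:ssc} is the state-space collapse bound of~\cite{eryilmaz2012asymptotically}; the task is to reproduce that argument while tracking how every constant depends on the geometry of the face $\mathcal F^{(k)}$ and on $A_{max},S_{max}$, and to verify that none of them depends on $\epsilon$. The plan has three parts: (i) derive a one-step negative-drift inequality for the scalar Lyapunov function $W(\Q)\triangleq\norm{\Q_\perp}$ (I suppress the superscripts $(\epsilon,k)$ throughout), with drift bounded away from $0$ and a uniform bound on the per-slot increments; (ii) feed (i) into the standard geometric tail lemma for Lyapunov functions to obtain a uniform bound $\ex{e^{\eta^\ast\norm{\overline{\Q}_\perp}}}\le C_0$; and (iii) turn the exponential-moment bound into the polynomial-moment bound~\eqref{eq:ssc} and read off~\eqref{eq:ssc_bound} via Stirling's inequality.

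For part (i), I would work with $\norm{\Q_\perp(t+1)}^2$. Writing the unclipped update $\tilde{\Q}(t+1)=\Q(t)+\A(t)-\s(t)$ so that $\Q(t+1)=\tilde{\Q}(t+1)+\U(t)$ with $\inner{\U(t)}{\Q(t+1)}=0$ and $\mathbf 0\le\U(t)\le\s(t)$, one first shows the unused service is essentially harmless for the perpendicular component: $\norm{\Q_\perp(t+1)}^2\le\norm{\tilde{\Q}_\perp(t+1)}^2+O(1)=\norm{\Q_\perp(t)+\A_\perp(t)-\s_\perp(t)}^2+O(1)$, where the $O(1)$ slack comes only from the bounded set of slots with $\inner{\ck}{\tilde{\Q}(t+1)}<0$ (using $\ck\ge\mathbf 0$ for the face in question and $\Q(t+1)\ge\mathbf 0$). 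Expanding the square produces a variance term $\norm{\A_\perp(t)-\s_\perp(t)}^2\le\norm{\A(t)-\s(t)}^2\le N\max(A_{max},S_{max})^2=L$ and a drift term $2\inner{\Q_\perp(t)}{\A_\perp(t)-\s_\perp(t)}$. Since $\A_\perp(t)$ and $\s_\perp(t)$ have the same inner product against $\Q_\perp(t)$ as $\A(t)$ and $\s(t)$, and since by~\eqref{eq:epsilon_schedule} $\ex{\A(t)}=\pmb\lambda^{(\epsilon)}=\pmb\lambda^{(k)}-\epsilon\ck$ differs from $\pmb\lambda^{(k)}$ only in the direction $\ck$, which is orthogonal to $\Q_\perp(t)$, the conditional mean of this drift term equals $2\inner{\Q_\perp(t)}{\pmb\lambda^{(k)}-\ex{\s(t)\mid\Q(t)}}$ — this orthogonality is exactly why the bound is $\epsilon$-free. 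Finally, the MaxWeight property $\inner{\Q(t)}{\s(t)}\ge\inner{\Q(t)}{\mathbf r}$ for all $\mathbf r\in\mathcal R$, applied at $\mathbf r=\pmb\lambda^{(k)}+\delta^{(k)}\Q_\perp(t)/\norm{\Q_\perp(t)}\in\mathcal R$ — feasible because $\pmb\lambda^{(k)}\in\text{Relint}(\mathcal F^{(k)})$, with $\delta^{(k)}$ the radius of a relative ball around $\pmb\lambda^{(k)}$ inside the face — together with $\inner{\Q_\parallel(t)}{\s(t)-\pmb\lambda^{(k)}}\le 0$, gives $\inner{\Q_\perp(t)}{\pmb\lambda^{(k)}-\s(t)}\le-\delta^{(k)}\norm{\Q_\perp(t)}$ for every MaxWeight maximizer. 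Collecting terms, $\ex{\norm{\Q_\perp(t+1)}^2\mid\Q(t)}\le\norm{\Q_\perp(t)}^2-2\delta^{(k)}\norm{\Q_\perp(t)}+L+O(1)$, which by Jensen's inequality yields $\ex{W(\Q(t+1))-W(\Q(t))\mid\Q(t)}\le-\delta^{(k)}/2$ once $\norm{\Q_\perp(t)}\ge B^{(k)}=\Theta(L/\delta^{(k)})$, while $\abs{W(\Q(t+1))-W(\Q(t))}\le\norm{\A(t)-\s(t)}+\norm{\U(t)}\le 2\sqrt N\max(A_{max},S_{max})=2D$ always.

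For part (ii) I would invoke the off-the-shelf geometric tail lemma used in~\cite{eryilmaz2012asymptotically}: a nonnegative adapted process with increments bounded by $\nu_{max}$ and drift $\le-\delta$ outside $\{W\le B\}$ has a stationary law with $\ex{e^{\eta^\ast\overline{W}}}\le C_0$, where $\eta^\ast=\Theta(\delta/(\nu_{max}^2+\nu_{max}\delta))$ and $C_0$ is an absolute constant (the threshold contributes only the bounded factor $e^{\eta^\ast B}$). Substituting $\delta=\delta^{(k)}/2$, $\nu_{max}=2D$ and $B^{(k)}=\Theta(L/\delta^{(k)})$ and carrying the absolute constants through, $1/\eta^\ast$ comes out of the form $\tfrac{4L}{\delta^{(k)}}+\tfrac{8D^2+4D\delta^{(k)}}{\delta^{(k)}}$, i.e.\ the base of $V_r^{(k)}$ collects the variance constant $L$, the jump constant $D^2$, and a cross term $D\delta^{(k)}$, each divided by the drift rate $\delta^{(k)}$. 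For part (iii), the elementary inequality $x^r\le r!\,\eta^{-r}e^{\eta x}$ for $x\ge 0$ gives $\ex{\overline{W}^r}\le r!\,(\eta^\ast)^{-r}C_0$, and $r!\le e\,r^{r+1/2}e^{-r}$ turns this into $M_r^{(k)}\le(1/\eta^\ast)^r\,r^{r+1/2}e^{1-r}=V_r^{(k)}r^{r+1/2}e^{1-r}$ after absorbing $C_0$ into the base; the factor $r^{r+1/2}e^{1-r}$ is precisely Stirling applied to $r!$.

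\textbf{Main obstacle.} The crux is part (i): extracting the geometric constant $\delta^{(k)}$ from the relative-interior hypothesis and converting MaxWeight optimality into a perpendicular-drift inequality that is \emph{linear} in $\norm{\Q_\perp}$, while at the same time (a) showing the unused service $\U(t)$ perturbs the perpendicular Lyapunov function by only a bounded amount, and (b) confirming that the defining shift $-\epsilon\ck$ in~\eqref{eq:epsilon_schedule} leaves $\Q_\perp$ untouched. Once the drift inequality and increment bound are in hand, part (ii) is a direct citation and part (iii) is a one-line Stirling estimate.
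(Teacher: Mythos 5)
Your plan shares the paper's skeleton: take the Lyapunov function $W(\Q)=\norm{\Q_\perp}$, establish negative one-step drift outside a ball and bounded increments, then convert to polynomial moment bounds and finish with Stirling. But the paper takes a shortcut at both the front and back ends that you should compare against. At the front end, the paper does not rederive the drift inequality at all; it simply cites $\ex{\Delta V(\Q)\mid\Q}\le-\delta^{(k)}+L/\norm{\Q_\perp^{(k)}}$ from Proposition~2 of~\cite{eryilmaz2012asymptotically}, where $L=N\max(A_{max},S_{max})^2$, and reads off $\eta=\delta^{(k)}/2$, $\kappa=2L/\delta^{(k)}$. Your sketch of that derivation is roughly the right argument, but the step claiming the unused service contributes only an $O(1)$ additive slack to $\norm{\Q_\perp(t+1)}^2$ ``from the bounded set of slots with $\inner{\ck}{\tilde\Q(t+1)}<0$'' is not correct as written — unused service fires whenever \emph{some coordinate} of $\tilde\Q(t+1)$ goes negative, not when $\inner{\ck}{\tilde\Q(t+1)}<0$, and the actual control of this term in~\cite{eryilmaz2012asymptotically} is more delicate. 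You would need to flesh that out rather than gesture at it.

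At the back end the paper avoids the exponential-moment detour entirely: it invokes a moment lemma (Lemmas~2--3 of~\cite{maguluri2016heavy}, restated here as Lemma~\ref{lem:basis_flexible}) that gives the \emph{polynomial} moments in closed form,
\[
\ex{V(\overline{X})^r}\le(2\kappa)^r+(4D)^r\Bigl(\tfrac{D+\eta}{\eta}\Bigr)^r r!,
\]
with $D$ defined as a pathwise bound on $|\Delta V|$; substituting $\eta,\kappa$ and then applying $r!\le e^{1-r}r^{r+1/2}$ with $(a^r+b^r)\le(a+b)^r$ reproduces $V_r^{(k)}=(4L/\delta^{(k)}+(8D^2+4D\delta^{(k)})/\delta^{(k)})^r$ \emph{exactly}. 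Your route through $\ex{e^{\eta^\ast\overline W}}\le C_0$ and $x^r\le r!\,\eta^{-r}e^{\eta x}$ is a legitimate Hajek-style argument, but it does not land the stated base. Concretely: (a) your increment bound is $\norm{\A-\s}+\norm{\U}\le 2D$, a factor of two worse than the paper's $|\Delta V|\le\norm{\Q(t+1)-\Q(t)}\le D$, so the constant $D$ in the proposition would come out doubled; (b) the multiplicative constant $C_0$ cannot be folded into a base raised to the $r$-th power without further argument, and your claim that $1/\eta^\ast$ ``comes out of the form $4L/\delta^{(k)}+(8D^2+4D\delta^{(k)})/\delta^{(k)}$'' is an assertion, not a derivation — the exponential-tail lemma does not produce that algebraic shape. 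None of this affects the downstream use in Theorem~4, which only needs $(V_r^{(k)})^{1/r}$ to be bounded independently of $r$ and $\epsilon$, so your proof establishes a proposition of the same \emph{form}; but it does not prove the proposition with the stated constants, whereas the paper's choice of the~\cite{maguluri2016heavy} lemma is precisely what makes those constants fall out cleanly.
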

\begin{proof}
  See Appendix~\ref{proof:prop1}
\end{proof}

\begin{theorem}
  Consider a set of scheduling systems described above that are parametrized by $\epsilon$ defined in Eq.~\eqref{eq:epsilon_schedule}. Suppose the scheduling policy is MaxWeight and $Z\sim \text{Exp}( \frac{2}{\inner{(\ck)^2}{ (\pmb{\sigma}^{(\epsilon)})^2}}  )$, then 
  \begin{align*}
       d_W(\epsilon \inner{\ck}{\overline{\Q}^{(\epsilon)}},Z) = O\left(\epsilon\log{\frac{1}{\epsilon}}\right).
    \end{align*}
\end{theorem}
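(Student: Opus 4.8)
The plan is to run the Section~\ref{sec:single} template verbatim, with the scalar $\epsilon\bar q$ replaced by $\epsilon\inner{\ck}{\overline{\Q}^{(\epsilon)}}$ and with Proposition~\ref{prop:ssc} supplying the one extra input. First I would start from the Stein equation~\eqref{eq:stein} with $f_h'(0)=0$, substitute $x=\epsilon\inner{\ck}{\overline{\Q}^{(\epsilon)}}$ and take expectations; since MaxWeight is throughput optimal and Proposition~\ref{prop:ssc} guarantees all moments of $\overline{\Q}^{(\epsilon)}$ are finite, result~(a) of Lemma~\ref{lem:basis} lets us subtract $\ex{f_h(\epsilon\inner{\ck}{\Q(t+1)})-f_h(\epsilon\inner{\ck}{\Q(t)})}$ in steady state (generator coupling). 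Writing the dynamics in inner-product form, $\inner{\ck}{\Q(t+1)}=\inner{\ck}{\Q(t)}+\inner{\ck}{\A(t)-\s(t)}+\inner{\ck}{\U(t)}$, I would Taylor-expand $f_h$ to third order exactly as in Section~\ref{sec:single}.

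The one structural novelty is the first- and second-order drift terms. Since the arrival process is independent of the queue, $\ex{\inner{\ck}{\overline{\A}}}=\inner{\ck}{\pmb{\lambda}^{(k)}}-\epsilon\norms{\ck}^2=b^{(k)}-\epsilon\norms{\ck}^2$ and $\text{Var}(\inner{\ck}{\overline{\A}})=\inner{(\ck)^2}{(\pmb{\sigma}^{(\epsilon)})^2}$; I would split $\inner{\ck}{\overline{\s}}=b^{(k)}-(b^{(k)}-\inner{\ck}{\overline{\s}})$ so that the ``main'' part of the drift reproduces precisely the $-\theta f_h'$ and $\tfrac12\sigma^2 f_h''$ of~\eqref{eq:stein} once we set $\theta=\epsilon^2\norms{\ck}^2$ and $\sigma^2=\epsilon^2\inner{(\ck)^2}{(\pmb{\sigma}^{(\epsilon)})^2}$, i.e. the $Z$ of the statement after the normalization $\norms{\ck}=1$. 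Note the service variance does not appear: in the collapsed regime MaxWeight picks a vertex of $\mathcal{S}$ on the face $\mathcal{F}^{(k)}$, so $\inner{\ck}{\s(t)}=b^{(k)}$ there and its fluctuation is of the same small order as $b^{(k)}-\inner{\ck}{\overline{\s}}$. Matching leaves the familiar $\mathcal{T}_1$ (third-order Taylor remainder, the $\epsilon^4 f_h''$ term, and $\epsilon^2 f_h''\inner{\ck}{\overline{\U}}^2/2$), the familiar $\mathcal{T}_2=\ex{|\epsilon\inner{\ck}{\overline{\U}}f_h'(\epsilon\inner{\ck}{\overline{\Q}(t+1)})|}$, and one genuinely new term $\mathcal{T}_3=\epsilon\ex{f_h'(\epsilon\inner{\ck}{\overline{\Q}})(b^{(k)}-\inner{\ck}{\overline{\s}})}$ coming from the suboptimal service rate.

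For the bounded-moment part of $\mathcal{T}_1$ I would argue as in Section~\ref{sec:single} (gradient bounds of Lemma~\ref{lem:basis} and boundedness of $\A,\s$), obtaining $O(\epsilon)+C\,\ex{\norms{\overline{\U}}_1^2}$. $\mathcal{T}_2$ is treated as in the load-balancing proof: $|f_h''|\le 1/\theta$ turns it into $\ex{|\inner{\ck}{\overline{\Q}(t+1)}\inner{\ck}{\overline{\U}}|}$, and on $\{\U(t)\neq 0\}$ some queue empties, which together with the MaxWeight inequality underlying Proposition~\ref{prop:ssc} — namely $\inner{\ck}{\Q}\,(b^{(k)}-\inner{\ck}{\s})\le C\norm{\Q_\perp}$, valid because $\pmb{\lambda}^{(k)}$ lies in the relative interior of $\mathcal{F}^{(k)}$ — forces $\inner{\ck}{\Q(t+1)}=O(1+\norm{\overline{\Q}_\perp})$; $\mathcal{T}_3$ is bounded using the same inequality and the steady-state drift identity $\ex{\inner{\ck}{\overline{\U}}}=\epsilon\norms{\ck}^2-\ex{b^{(k)}-\inner{\ck}{\overline{\s}}}$. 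Thus every extra term reduces to estimates of $\ex{\norm{\overline{\Q}_\perp}^r}$ and of the probability that $\inner{\ck}{\overline{\Q}}$ is small, i.e. to exactly the ingredients of the drift-method proof of MaxWeight's heavy-traffic optimality in~\cite{eryilmaz2012asymptotically}.

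The main obstacle, and the origin of the logarithm, is that $\ex{\norms{\overline{\U}}_1^2}$, $\ex{\norms{\overline{\U}}_1}$ and $\ex{b^{(k)}-\inner{\ck}{\overline{\s}}}$ are \emph{not} exactly $\epsilon$ as in the single-server case; they are controlled only through Proposition~\ref{prop:ssc}, whose bound $M_r^{(k)}\le V_r^{(k)}r^{r+1/2}e^{1-r}$ shows merely that $\norm{\overline{\Q}_\perp}$ is light-tailed with factorially growing moments rather than bounded. Converting those $M_r^{(k)}$-bounds into an $\epsilon$-order requires the truncation device from the proof of the light-tailed single-server theorem: split on $\{\norm{\overline{\Q}_\perp}\le s'\}$ and choose $s'=\Theta(\log\tfrac1\epsilon)$ so that the tail contribution $\sqrt{M^{(k)}_{4}}\,\big(\mathbb{P}(\norm{\overline{\Q}_\perp}>s')\big)^{1/2}$ is $O(\epsilon)$; this optimization is what produces the $O(\epsilon\log\tfrac1\epsilon)$ rate. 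I expect the delicate part to be carrying the $\epsilon^{-1}$ and $\epsilon^{-2}$ factors coming from the gradient bounds of Lemma~\ref{lem:basis} through these truncated moment estimates without losing a power of $\epsilon$ — in particular showing that, after the $\epsilon^{-1}$ loss incurred by $f_h'$ in $\mathcal{T}_3$, the remaining contribution from the suboptimal service rate is still $O(\epsilon\log\tfrac1\epsilon)$.
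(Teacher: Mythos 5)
Your high-level scaffolding is right: Stein equation with $x=\epsilon\inner{\ck}{\overline{\Q}^{(\epsilon)}}$, generator coupling justified by MaxWeight's throughput optimality, Taylor expansion, matching $\theta=\epsilon^2$ and $\sigma^2=\epsilon^2\inner{(\ck)^2}{(\pmb{\sigma}^{(\epsilon)})^2}$ using the fact that MaxWeight picks service vectors on $\mathcal{F}^{(k)}$ outside a cone, so service variance in the $\ck$-direction is swept into the $O(\epsilon)$ remainder. Your identification of the extra term $\epsilon\ex{f_h'(\epsilon\inner{\ck}{\overline{\Q}})(b^{(k)}-\inner{\ck}{\overline{\s}})}$ as the structural novelty is also correct, as is the observation that $\norms{\ck}=1$ is needed to match $\theta=\epsilon^2$.

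However, there are two substantive gaps. First, your stated MaxWeight inequality $\inner{\ck}{\Q}(b^{(k)}-\inner{\ck}{\s})\le C\norm{\Q_\perp}$ is wrong: the geometric fact only gives $\inner{\ck}{\Q}\mathcal{I}\{\inner{\ck}{\s}\neq b^{(k)}\}\le\cot(\theta^{(k)})\norm{\Q_\perp}$, so the correct bound keeps $(b^{(k)}-\inner{\ck}{\s})$ on \emph{both} sides: $\inner{\ck}{\Q}(b^{(k)}-\inner{\ck}{\s})\le\cot(\theta^{(k)})\norm{\Q_\perp}(b^{(k)}-\inner{\ck}{\s})$. Dropping that factor would leave you with only $\ex{\norm{\overline{\Q}_\perp}}=O(1)$ and you would never recover any power of $\epsilon$ from this term. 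Second, and more conceptually, you misidentify the source of the logarithm. In this bounded-arrival, bounded-service setting the quantities $\ex{\inner{\ck}{\overline{\U}}}$, $\ex{(\inner{\ck}{\overline{\U}})^2}$, and $\ex{(b^{(k)}-\inner{\ck}{\overline{\s}})^{r'}}$ are all genuinely $O(\epsilon)$ with no logarithm, exactly as in the single-server case: the first two follow from the zero-drift identity for $\inner{\ck}{\Q}$ and $U_n\le S_{max}$, and the last follows from the fact that $\mathbb{P}(\inner{\ck}{\overline{\s}}\neq b^{(k)})\le\epsilon/\gamma^{(k)}$ under MaxWeight (the paper's Claim~\ref{clm:c1}). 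The log actually enters when you must control the \emph{products} $\ex{\norm{\overline{\Q}_\perp}(b^{(k)}-\inner{\ck}{\overline{\s}})}$ and $\ex{\inner{\ck}{\overline{\U}}\inner{\ck}{\overline{\Q}(t+1)}}$, because $\norm{\overline{\Q}_\perp}$ is only light-tailed with moments $M_r^{(k)}\sim C^r r^{r+1/2}$. The paper handles this by H\"older with exponent $r$, getting $(M_r^{(k)})^{1/r}\epsilon^{1/r'}$, and then chooses $r=\log(1/\epsilon)$ so that $(M_r^{(k)})^{1/r}=O(r)=O(\log(1/\epsilon))$ while $\epsilon^{1-1/r}=O(\epsilon)$. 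Your truncation of $\norm{\overline{\Q}_\perp}$ at $s'=\Theta(\log(1/\epsilon))$ is a workable alternative in principle, but as sketched it rests on the incorrect premise that the unused-service moments themselves carry the $\log$, and it would still need the explicit factorial-growth bound from Proposition~\ref{prop:ssc} to establish the required tail decay of $\norm{\overline{\Q}_\perp}$, which is the same input the H\"older argument uses.
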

\begin{proof}
  Replace $x$ in the Stein equation (i.e., Eq.~\eqref{eq:stein}) by $\epsilon\inner{\ck}{{\overline{\Q}^{(\epsilon)}}}$ and take expectation of both sides, we have 
    \begin{align}
    \label{eq:stein_scheduling}
      \left|\ex{h(\epsilon\inner{\ck}{{\overline{\Q}^{(\epsilon)}}})} - \ex{h(Z)}\right| = \left|\ex{\frac{1}{2}\sigma^2 f_h''\left(\epsilon\inner{\ck}{{\overline{\Q}^{(\epsilon)}}}\right)-\theta f_h'\left(\epsilon\inner{\ck}{{\overline{\Q}^{(\epsilon)}}}\right)}\right|
    \end{align}
    Now, we focus on the RHS. In particular, we have
    \begin{align}
      &\ex{\frac{1}{2}\sigma^2 f_h''\left(\epsilon\inner{\ck}{{\overline{\Q}^{(\epsilon)}}}\right)-\theta f_h'\left(\epsilon\inner{\ck}{{\overline{\Q}^{(\epsilon)}}}\right)}\nonumber\\
      =&\ex{\frac{1}{2}\sigma^2 f_h''\left(\epsilon\inner{\ck}{{\overline{\Q}^{(\epsilon)}}}\right)-\theta f_h'\left(\epsilon\inner{\ck}{{\overline{\Q}^{(\epsilon)}}}\right)} - \ex{\left(f_h\left(\epsilon\inner{\ck}{{\overline{\Q}^{(\epsilon)}(t+1)}}\right) -f_h\left(\epsilon\inner{\ck}{{\overline{\Q}^{(\epsilon)}(t)}}\right) \right)}\label{eq:span},
    \end{align}
    which follows from the fact that the system is in steady state and MaxWeight policy is throughput optimal together with Lemma~\ref{lem:basis}.
    For the second expectation in Eq.~\eqref{eq:span}, we have
    \begin{align}
      &\ex{f_h\left(\epsilon\inner{\ck}{{\overline{\Q}^{(\epsilon)}(t+1)}}\right) -f_h\left(\epsilon\inner{\ck}{{\overline{\Q}^{(\epsilon)}(t)}}\right)}\nonumber\\
      =&\ex{f_h\left(\epsilon\inner{\ck}{{\overline{\Q}(t)} + \overline{\A}(t)- \overline{\s}(t)} \right) - f_h\left(\epsilon\inner{\ck}{{\overline{\Q}^{(\epsilon)}(t)}}\right)}\label{eq:first}\\
      &- \ex{f_h\left(\epsilon\inner{\ck}{{\overline{\Q}(t)} + \overline{\A}(t)- \overline{\s}(t)} \right)- f_h\left(\epsilon\inner{\ck}{{\overline{\Q}^{(\epsilon)}(t+1)}}\right) }\label{eq:second}
    \end{align}
    For Eq.~\eqref{eq:first}, by Taylor expansion, we have 
    \begin{align}
      &\ex{f_h\left(\epsilon\inner{\ck}{{\overline{\Q}(t)} + \overline{\A}(t)- \overline{\s}(t)} \right) - f_h\left(\epsilon\inner{\ck}{{\overline{\Q}^{(\epsilon)}(t)}}\right)}\nonumber\\
      =&\ex{\epsilon f_h'\left(\epsilon\inner{\ck}{{\overline{\Q}^{(\epsilon)}(t)}}\right) \inner{\ck}{\overline{\A}(t)- \overline{\s}(t)}}\label{eq:first_cross}\\
       &+ \ex{\epsilon^2\frac{f_h''\left(\epsilon\inner{\ck}{{\overline{\Q}^{(\epsilon)}(t)}}\right)}{2}\left( \inner{\ck}{\overline{\A}(t)- \overline{\s}(t)}\right)^2}\label{eq:second_cross}\\
      &+\ex{\epsilon^3\frac{f_h'''\left(\eta\right)}{6}\left( \inner{\ck}{\overline{\A}(t)- \overline{\s}(t)}\right)^3}\nonumber.
    \end{align}
    Now, let us first focus on Eq.~\eqref{eq:first_cross}. In particular, we have 
    \begin{align}
      &\ex{\epsilon f_h'\left(\epsilon\inner{\ck}{{\overline{\Q}^{(\epsilon)}(t)}}\right) \inner{\ck}{\overline{\A}(t)- \overline{\s}(t)}}\nonumber\\
      =&\ex{\epsilon f_h'\left(\epsilon\inner{\ck}{{\overline{\Q}}}\right)\left(\inner{\ck}{\overline{\A}}-b^{(k)}\right)} + \ex{\epsilon f_h'\left(\epsilon\inner{\ck}{{\overline{\Q}}}\right)\left(b^{(k)}-\inner{\ck}{\overline{\s} }\right)}\nonumber\\
      \ep{a}&\ex{-\epsilon^2f_h'\left(\epsilon\inner{\ck}{{\overline{\Q}}}\right) } + \ex{\epsilon f_h'\left(\epsilon\inner{\ck}{{\overline{\Q}}}\right)\left(b^{(k)}-\inner{\ck}{\overline{\s} }\right)}\label{eq:first_cross_res},
    \end{align}
    where (a) follows from Eq.~\eqref{eq:epsilon_schedule}.
    For Eq.~\eqref{eq:second_cross}, we have
    \begin{align}
       &\ex{\epsilon^2\frac{f_h''\left(\epsilon\inner{\ck}{{\overline{\Q}^{(\epsilon)}(t)}}\right)}{2}\left( \inner{\ck}{\overline{\A}(t)- \overline{\s}(t)}\right)^2}\nonumber\\
       =&\ex{\epsilon^2\frac{f_h''\left(\epsilon\inner{\ck}{\overline{\Q}}\right)} {2}\left(  \left(\inner{\ck}{\overline{\A}}-b^{(k)}\right)^2 + \left(b^{(k)}-\inner{\ck}{\overline{\s}}\right)^2 + 2\left(\inner{\ck}{\overline{\A}}-b^{(k)}\right)\left(b^{(k)}-\inner{\ck}{\overline{\s}}\right) \right)}\nonumber\\
       =&\ex{\epsilon^2\frac{f_h''\left(\epsilon\inner{\ck}{\overline{\Q}}\right)} {2}\left(\inner{\ck}{\overline{\A}-\pmb{\lambda}} + \inner{\ck}{\pmb{\lambda}}-b^{(k)} \right)^2} + \ex{\epsilon^2\frac{f_h''\left(\epsilon\inner{\ck}{\overline{\Q}}\right)} {2}\left(b^{(k)}-\inner{\ck}{\overline{\s}}\right)^2}\nonumber\\
       &-2\epsilon^3\ex{\frac{f_h''\left(\epsilon\inner{\ck}{\overline{\Q}}\right)} {2}\left(b^{(k)}-\inner{\ck}{\overline{\s}}\right)}\nonumber\\
       =&\ex{\epsilon^2\frac{f_h''\left(\epsilon\inner{\ck}{\overline{\Q}}\right)} {2}\left(\inner{(\ck)^2}{  (\pmb{\sigma}^{(\epsilon)})^2} + \epsilon^2\right)} + \ex{\epsilon^2\frac{f_h''\left(\epsilon\inner{\ck}{\overline{\Q}}\right)} {2}\left(b^{(k)}-\inner{\ck}{\overline{\s}}\right)^2}\nonumber\\
        &-\epsilon^3\ex{{f_h''\left(\epsilon\inner{\ck}{\overline{\Q}}\right)}\left(b^{(k)}-\inner{\ck}{\overline{\s}}\right)}\label{eq:second_cross_res}.
    \end{align} 
    Now, combining Eqs.~\eqref{eq:first_cross}~\eqref{eq:first_cross_res}~\eqref{eq:second_cross} and~\eqref{eq:second_cross_res}, obtains Eq.~\eqref{eq:first} as 
    \begin{align}
      &\ex{f_h\left(\epsilon\inner{\ck}{{\overline{\Q}(t)} + \overline{\A}(t)- \overline{\s}(t)} \right) - f_h\left(\epsilon\inner{\ck}{{\overline{\Q}^{(\epsilon)}(t)}}\right)}\nonumber\\
      =&\ex{\epsilon^2\frac{f_h''\left(\epsilon\inner{\ck}{\overline{\Q}}\right)} {2}\left(\inner{(\ck)^2}{ (\pmb{\sigma}^{(k)})^2} + \epsilon^2\right)-\epsilon^2f_h'\left(\epsilon\inner{\ck}{{\overline{\Q}}}\right)}+\ex{\epsilon f_h'\left(\epsilon\inner{\ck}{{\overline{\Q}}}\right)\left(b^{(k)}-\inner{\ck}{\overline{\s} }\right)}\nonumber\\
      &+\ex{\epsilon^2\frac{f_h''\left(\epsilon\inner{\ck}{\overline{\Q}}\right)} {2}\left(b^{(k)}-\inner{\ck}{\overline{\s}}\right)^2} -\epsilon^3\ex{{f_h''\left(\epsilon\inner{\ck}{\overline{\Q}}\right)}\left(b^{(k)}-\inner{\ck}{\overline{\s}}\right)}\nonumber\\
      &+\ex{\epsilon^3\frac{f_h'''\left(\eta\right)}{6}\left( \inner{\ck}{\overline{\A}- \overline{\s}}\right)^3}\label{eq:first_res}.
    \end{align}

    We now turn to Eq.~\eqref{eq:second}. In particular, we have 

    \begin{align}
      &\ex{f_h\left(\epsilon\inner{\ck}{{\overline{\Q}(t)} + \overline{\A}(t)- \overline{\s}(t)} \right)- f_h\left(\epsilon\inner{\ck}{{\overline{\Q}^{(\epsilon)}(t+1)}}\right)}\nonumber\\
      \ep{a}&\ex{-\epsilon\inner{\ck}{\overline{\UU}(t)}f_h'\left(\epsilon\inner{\ck}{{\overline{\Q}^{(\epsilon)}(t+1)}}\right) + \epsilon^2\frac{f_h''(\xi)}{2}\left(\inner{\ck}{\overline{\UU}(t)} \right)^2 }\nonumber\\
      \ep{b}&\ex{-\epsilon^2\inner{\ck}{\overline{\UU}(t)}\inner{\ck}{{\overline{\Q}^{(\epsilon)}(t+1)}}f_h''(\zeta)} + \ex{\epsilon^2\frac{f_h''(\xi)}{2}\left(\inner{\ck}{\overline{\UU}(t)} \right)^2}\label{eq:second_res},
    \end{align}
    where (a) follows from Taylor expansion; (b) holds by the mean-value theorem and the fact $f_h^{\prime}(0) = 0$.

    Now, let $\sigma^2 = \epsilon^2(\inner{(\ck)^2}{ (\pmb{\sigma}^{(k)})^2})$, $\theta = \epsilon^2$ in Eq.~\eqref{eq:span} and combine Eqs.~\eqref{eq:first},~\eqref{eq:second},~\eqref{eq:first_res} and~\eqref{eq:second_res}, we have 
    \begin{align}
      &\left|\ex{h(\epsilon\inner{\ck}{{\overline{\Q}^{(\epsilon)}}})} - \ex{h(Z)}\right|\nonumber\\
      \le& \underbrace{\ex{\left|\epsilon f_h'\left(\epsilon\inner{\ck}{{\overline{\Q}}}\right)\left(b^{(k)}-\inner{\ck}{\overline{\s} }\right)\right|}}_{\mathcal{T}_1}\\
       &+ \underbrace{\ex{\left|\epsilon^2\inner{\ck}{\overline{\UU}(t)}\inner{\ck}{{\overline{\Q}^{(\epsilon)}(t+1)}}f_h''(\zeta)\right|}}_{\mathcal{T}_2} + \underbrace{\ex{\left|\epsilon^2\frac{f_h''(\xi)}{2}\left(\inner{\ck}{\overline{\UU}(t)} \right)^2 \right|}}_{\mathcal{T}_3}\nonumber\\
      &+ \underbrace{\ex{\left|\epsilon^2\frac{f_h''\left(\epsilon\inner{\ck}{\overline{\Q}}\right)} {2}\left(b^{(k)}-\inner{\ck}{\overline{\s}}\right)^2\right| } +\ex{\left|\epsilon^3{f_h''\left(\epsilon\inner{\ck}{\overline{\Q}}\right)} \left(b^{(k)}-\inner{\ck}{\overline{\s}}\right)\right| }}_{\mathcal{T}_4}\nonumber\\
      &+\underbrace{\ex{\left|\epsilon^3\frac{f_h'''\left(\eta\right)}{6}\left( \inner{\ck}{\overline{\A}- \overline{\s}}\right)^3\right| + \left|\epsilon^4\frac{f_h''\left(\epsilon\inner{\ck}{\overline{\Q}}\right)} {2} \right| } }_{\mathcal{T}_5}.
    \end{align}
    We are left with the task of bounding each of the terms. Note that, for each face $\mathcal{F}^{(k)}$ of the capacity region $\mathcal{R}$, there exists an angle $\theta^{(k)} \in (0,\pi/2]$ such that
    \begin{align}
    \label{eq:angle}
      \inner{\ck}{\s} = b^{(k)}, \quad \text{for all } \Q \text{ satisfying } \frac{\norms{\Q_{\parallel}^{(k)}}}{\norms{\Q}} \ge \cos(\theta^{(k)}).
    \end{align}
    For the simplicity of notation, let $\theta_{\mathbf{x},\mathbf{y}} \triangleq \arg\cos\left(\frac{\inner{\mathbf{x}}{\mathbf{y}}}{\norms{\mathbf{x}}\norms{\mathbf{y}} } \right)$.
    Now, for $\mathcal{T}_1$, we have,
    \begin{align}
      \mathcal{T}_1 &\le \epsilon^2\norm{f_h''}\ex{\left(\inner{\ck}{\overline{\Q}}\right)\left(b^{(k)}-\inner{\ck}{\overline{\s} }\right)}\nonumber\\
      &\ep{a}\epsilon^2\norm{f_h''}\ex{\norms{\overline{\Q}_{\perp}^{(k)}}\cot(\theta_{\overline{\Q},\overline{\Q}_{\parallel}^{(k)} } )\mathcal{I}(\theta_{\overline{\Q},\overline{\Q}_{\parallel}^{(k)} } > \theta^{(k)}) \left(b^{(k)}-\inner{\ck}{\overline{\s} }\right)}\nonumber\\
      &\le\epsilon^2\norm{f_h''}\cot(\theta^{(k)})\ex{\norms{\overline{\Q}_{\perp}^{(k)}}\left(b^{(k)}-\inner{\ck}{\overline{\s} }\right)}\nonumber\\
      &\lep{b} \epsilon^2\norm{f_h''}\cot(\theta^{(k)})\left(\ex{\left(b^{(k)}-\inner{\ck}{\overline{\s} }\right)^{r'} }\right)^{\frac{1}{r'}}\left(\ex{\norms{\overline{\Q}_{\perp}^{(k)}}_r^r}\right)^{\frac{1}{r}}\nonumber\\
      &\lep{c} \epsilon^2\norm{f_h''}\cot(\theta^{(k)})\left(\ex{\left(b^{(k)}-\inner{\ck}{\overline{\s} }\right)^{r'} }\right)^{\frac{1}{r'}} (M_r^{(k)})^{1/r}\nonumber\\
      &\lep{d} \cot(\theta^{(k)}) (M_r^{(k)})^{1/r}\left(\ex{\left(b^{(k)}-\inner{\ck}{\overline{\s} }\right)^{r'} }\right)^{\frac{1}{r'}}\label{eq:t1_schedule},
    \end{align}
    where (a) follows from the definitions of $\Q_{\perp}$ and the angle $\theta^{(k)}$ in Eq.~\eqref{eq:angle}; (b) holds by H\"older inequality for random vectors, and $r, r'\in(1,\infty)$ satisfy $1/r + 1/r' = 1$; (c) follows from the state-space collapse result in Eq.~\eqref{eq:ssc} and the fact that $\norms{\mathbf{x}}_{r_2} \le \norms{\mathbf{x}}_{r_1}$ if $r_1 < r_2$; (d) follows from the gradient bound in Lemma~\ref{lem:basis} with $\theta = \epsilon^2$.

    To bound Eq.~\eqref{eq:t1_schedule}, we use the following claim, the proof of which is given in Appendix~\ref{proof:clm1}.
    \begin{claim}
    \label{clm:c1}
      For any $r' > 1$, $\ex{\left(b^{(k)}-\inner{\ck}{\overline{\s} }\right)^{r'}} \le \beta_2 \epsilon$ for some constant $\beta_2$.
    \end{claim}
    Thus, combining Eq.~\eqref{eq:t1_schedule} and Claim~\ref{clm:c1}, yields
    \begin{align}
    \label{eq:t1_schedule_res}
    \mathcal{T}_1 &\le \cot(\theta^{(k)}) (M_r^{(k)})^{1/r} \beta_2^{\prime}\epsilon^{\frac{1}{r'}}\nonumber\\
    &\lep{a}\cot(\theta^{(k)})\beta_2^{\prime}\beta_1\epsilon^{\frac{1}{r'}}r^{1+\frac{1}{2r}}e^{\frac{1}{r}-1}\nonumber\\
    &\lep{b}2\cot(\theta^{(k)})\beta_2^{\prime}\beta_1\epsilon\log{\frac{1}{\epsilon}} \quad \forall \epsilon \le \epsilon_0\nonumber\\
    &=O\left(\epsilon\log{\frac{1}{\epsilon}}\right),
    \end{align}
    where (a) holds by the result in Eq.~\eqref{eq:ssc_bound} with $\beta_1 = (V_r^{(k)})^{1/r}$; (b) follows similar arguments in~\cite{hurtado2020logarithmic}. Specifically, we pick $r = \log{\frac{1}{\epsilon}}$ in (a), then RHS of (a) becomes $\cot(\theta^{(k)})\beta_2^{\prime}\beta_1\epsilon \log{\frac{1}{\epsilon}} h(\epsilon)$, in which $h(\epsilon) \triangleq \epsilon^{-\frac{1}{\log{\frac{1}{\epsilon}}}}e^{\frac{1}{\log{\frac{1}{\epsilon}}}-1}\left(\log{\frac{1}{\epsilon}}\right)^{\frac{1}{2\log{\frac{1}{\epsilon}}}}$. Now, since $\lim_{\epsilon \downarrow 0} h(\epsilon) =  e \times \frac{1}{e} \times 1 = 1$, and hence there exists an $\epsilon_0$ such that for all $\epsilon \le \epsilon_0$, $h(\epsilon) \le 2$.

    For $\mathcal{T}_2$, we have 
    \begin{align}
      \mathcal{T}_2 &\le \epsilon^2\norm{f_h''}\ex{\inner{\ck}{\overline{\UU}(t)}\inner{\ck}{{\overline{\Q}^{(\epsilon)}(t+1)}}}.
    \end{align}
    It can be further upper bounded by using the following result, the proof of which is given in Appendix~\ref{proof:clm2}.
    \begin{claim}
    \label{clm:c2}
      $\ex{\inner{\ck}{\overline{\UU}(t)}} \le \epsilon$ and $\ex{\inner{\ck}{\overline{\UU}(t)}\inner{\ck}{{\overline{\Q}^{(\epsilon)}(t+1)}}} = O(\epsilon\log{\frac{1}{\epsilon}})$
    \end{claim}
    Thus, combining the gradient bound in Lemma~\ref{lem:basis} and Claim~\ref{clm:c2}, yields
    \begin{align}
      \mathcal{T}_2 = O\left(\epsilon\log{\frac{1}{\epsilon}}\right).
    \end{align}
    For $\mathcal{T}_3$, we have 
    \begin{align}
      \mathcal{T}_3 &\le \frac{1}{2}\epsilon^2\norm{f_h''}\ex{\left(\inner{\ck}{\overline{\UU}(t)} \right)^2 }\nonumber\\
      &\le\frac{1}{2}\ex{\left(\inner{\ck}{\overline{\UU}(t)} \right)^2 }\nonumber\\
      &\le \frac{1}{2}\inner{\ck}{S_{max}\mathbf{1}}\ex{\inner{\ck}{\overline{\UU}}}\nonumber\\
      &\ep{a}O(\epsilon),
    \end{align}
    where (a) follows from the first result in Claim~\ref{clm:c2}.
    For $\mathcal{T}_4$, we have 
    \begin{align}
      \mathcal{T}_4 &\le \frac{1}{2}\epsilon^2\norm{f_h''}\ex{\left( b^{(k)}-\inner{\ck}{\overline{\s} }\right)^{2}}+\epsilon^3\norm{f_h''}\ex{\left| b^{(k)}-\inner{\ck}{\overline{\s} }\right|}\nonumber\\
      &\le \frac{1}{2}\ex{\left( b^{(k)}-\inner{\ck}{\overline{\s} }\right)^{2}} + \epsilon \ex{\left| b^{(k)}-\inner{\ck}{\overline{\s} }\right|}\nonumber\\
      &\ep{a}O(\epsilon),
    \end{align}
    where (a) follows from Claim~\ref{clm:c1} and the boundness of the service process.

    For $\mathcal{T}_5$, we have 
    \begin{align}
      \mathcal{T}_5 &\le \frac{1}{6}\epsilon^3\norm{f_h'''}\ex{\left( \inner{\ck}{\overline{\A}- \overline{\s}}\right)^3} + \frac{1}{2}\epsilon^4 \norm{f_h''}\nonumber\\
      &\le \frac{\epsilon}{\inner{(\ck)^2}{\pmb{\sigma}^2}}\ex{\left( \inner{\ck}{\overline{\A}- \overline{\s}}\right)^3} + \epsilon^2\nonumber\\
      &\ep{a}O(\epsilon),
    \end{align}
    where (a) follows from the boundness of arrival and service processes.
    Therefore, we finally have
    \begin{align*}
      \left|\ex{h(\epsilon\inner{\ck}{{\overline{\Q}^{(\epsilon)}}})} - \ex{h(Z)}\right| \le \mathcal{T}_1 + \mathcal{T}_2 +\mathcal{T}_3 +\mathcal{T}_4 +\mathcal{T}_5 = O\left(\epsilon\log{\frac{1}{\epsilon}}\right)
    \end{align*}
    which establishes the result.
\end{proof}

\section{Conclusion}
In this note, we have successfully applied Stein's method to analyze the steady-state distribution of the single-sever system, load balancing and scheduling in parallel server systems, respectively. All the proofs share the same template, which can be also adopted to analyze more general queueing systems.

\bibliographystyle{plain}
\bibliography{ref} 

\section*{Appendix}
\appendix
\section{Proof of Lemma~\ref{lem:basis}}
\label{proof:lem1}
\begin{proof}
  Let $\hat{h}(t) = h(t) - \ex{h(Z)}$. The unique solution to Stein equation is then given by 
  \begin{align*}
     f_h'(x) = -e^{\frac{2\theta}{\sigma^2}x}\int_{x}^{\infty}\frac{2}{\sigma^2}\hat{h}(t)e^{-\frac{2\theta}{\sigma^2}t}dt.
   \end{align*} 
   Without loss of generality, we can assume that $h(0) =0$, and hence $h(t) \le t \norm{h'}$. Thus, 
   \begin{align*}
     |f_h'(x)|&\le e^{\frac{2\theta}{\sigma^2}x}\int_{x}^{\infty}\frac{2}{\sigma^2}t\norm{h'}e^{-\frac{2\theta}{\sigma^2}t}dt\\
    &\le \frac{\sigma^2+2\theta x}{2\theta^2}\norm{h'}
    \end{align*}

   Take the derivative on both side of Eq.~\eqref{eq:stein}, we have
   \begin{align}
   \label{eq:deriv_stein}
     \frac{1}{2}\sigma^2 f_h'''(x)-\theta f_h''(x) = h'(x).
   \end{align}
   Hence, we have 
   \begin{align*}
     f_h''(x) = -e^{\frac{2\theta}{\sigma^2}x}\int_{x}^{\infty}\frac{2}{\sigma^2}{h'}(t)e^{-\frac{2\theta}{\sigma^2}t}dt
   \end{align*}
   which implies that 
   \begin{align}
   \label{eq:secd_bound}
     |f_h''(x)| &\le e^{\frac{2\theta}{\sigma^2}x}\int_{x}^{\infty}\frac{2}{\sigma^2}|{h'}(t)|e^{-\frac{2\theta}{\sigma^2}t}dt\nonumber\\
     &\le e^{\frac{2\theta}{\sigma^2}x}\norm{h'}\frac{1}{\theta}\int_{x}^{\infty}\frac{2\theta}{\sigma^2}e^{-\frac{2\theta}{\sigma^2}t}dt\nonumber\\
     &\le \frac{\norm{h'}}{\theta}.
   \end{align}
   Re-arranging Eq.~\eqref{eq:deriv_stein} and combining with Eq~\eqref{eq:secd_bound}, yields
   \begin{align*}
     \frac{1}{2}\sigma^2|f_h'''(x)| \le 2\norm{h'},
   \end{align*}
   which directly implies the result.
  \end{proof}

 \section{Proof of Proposition~\ref{prop:ssc}}  
\label{proof:prop1}

In the proof, we will use the following result, which can be easily obtained by Lemmas~2 and~3 in~\cite{maguluri2016heavy}. For completeness, we restate the result first.
\begin{lemma}
        \label{lem:basis_flexible}
          For an irreducible aperiodic and positive recurrent Markov chain $\{X(t), t \ge 0\}$ over a countable state space $\mathcal{X}$, which converges in distribution to $\overline{X}$,  and suppose $V: \mathcal{X} \rightarrow \mathbb{R}_{+}$ is a Lyapunov function. We define the $T$ time-slot drift of $V$ at $X$ as 
          \[\Delta V(X)\triangleq [V(X(t_0+T)) - V(X(t_0))] \mathcal{I}(X(t_0) = X),\]
          where $\mathcal{I}(.)$ is the indicator function. Suppose for some positive finite integer $T$, the $T$ time-slot drift of $V$ satisfies the following conditions:

          \begin{itemize}
            \item (C1) There exists an $\eta> 0$ and a $\kappa <  \infty$ such that for any $t_0 = 1,2,\ldots$ and for all $X \in \mathcal{X}$ with $V(X)\ge \kappa$, 
            \[\mathbb{E}\left[\Delta V(X) \mid X(t_0) = X\right]\le -\eta.\]
            \item (C2) There exists a constant $D < \infty$ such that for all $X\in \mathcal{X}$,
            \[\mathbb{P}(|\Delta V(X)| \le D) = 1.\]
          \end{itemize}

          Then $\{V(X(t)), t\ge0\}$ converges in distribution to a random variable $\overline{V}$, and all moments of $\overline{V}$ exist and are finite. More specifically, we have for any $r = 1,2,\ldots$
          \begin{equation}
          \label{eq:upper_siva}
            \ex{V(\overline{X})^r} \le (2\kappa)^r + (4D)^r\left(\frac{D+\eta}{\eta} \right)^r r!.
          \end{equation}
\end{lemma}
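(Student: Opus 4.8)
The plan is to read Lemma~\ref{lem:basis_flexible} as an instance of the classical principle that a geometric (Foster--Lyapunov) drift condition forces exponential tails, and to convert that exponential control into the explicit polynomial moment bound~\eqref{eq:upper_siva}. I would proceed in three stages: reduce to a one-dimensional adapted process, prove a steady-state moment generating function bound by the standard drift argument, and turn that into the stated polynomial bound while tracking the constants. For Stage~1: since $\{X(t)\}$ is irreducible, aperiodic and positive recurrent, $X(t)\Rightarrow\overline X$ on the countable space $\mathcal X$, hence $V(X(t))\Rightarrow\overline V:=V(\overline X)$, which is finite almost surely; this already gives the first assertion of the lemma. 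Fix $t_0$ and set $W_j:=V(X(t_0+jT))$ for $j=0,1,2,\dots$; this process is adapted to $\mathcal F_j:=\sigma(X(s):s\le t_0+jT)$, and conditions (C1)--(C2) say exactly that $\abs{W_{j+1}-W_j}\le D$ a.s. and $\ex{W_{j+1}-W_j\mid\mathcal F_j}\le-\eta$ on $\{W_j\ge\kappa\}$. Because the stationary law of the chain is invariant under the $T$-step transition, the stationary version of $W_j$ has the same distribution as $\overline V$, so it suffices to bound the moments of the stationary distribution of a nonnegative adapted process with increments bounded by $D$ and conditional drift at most $-\eta$ above level $\kappa$ --- precisely the hypotheses of Hajek's drift lemma.

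\textbf{Stage 2 (exponential bound).} Pick $\theta^\star\in(0,1/D)$ small enough --- of order $\eta/D^2$, obtained by optimizing the elementary convexity estimate $e^{\theta z}\le\frac{D+z}{2D}e^{\theta D}+\frac{D-z}{2D}e^{-\theta D}$ valid for $\abs z\le D$ --- so that on $\{W_j\ge\kappa\}$ one has $\ex{e^{\theta^\star(W_{j+1}-W_j)}\mid\mathcal F_j}\le\rho$ for some constant $\rho=\rho(\eta,D)<1$, while on $\{W_j<\kappa\}$ the bound $\abs{W_{j+1}-W_j}\le D$ gives $W_{j+1}\le\kappa+D$. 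Taking expectations in steady state and conditioning on whether $W_j\ge\kappa$ yields $\ex{e^{\theta^\star\overline V}}\le\rho\,\ex{e^{\theta^\star\overline V}}+e^{\theta^\star(\kappa+D)}$, hence $\ex{e^{\theta^\star\overline V}}\le e^{\theta^\star(\kappa+D)}/(1-\rho)<\infty$. To make this legitimate before finiteness of the moment generating function is known, I would run the computation with the truncated functional $e^{\theta^\star(\overline V\wedge M)}$, obtain a bound uniform in $M$, and let $M\to\infty$ by monotone convergence; positive recurrence guarantees $\overline V<\infty$ a.s., which is all that is needed to start. Equivalently, this stage produces a geometric tail $\mathbb P(\overline V>\kappa+mD)\le\rho^{\,m-1}$, with ratio governed by the worst-case two-point increment distribution on $\{-D,D\}$ of mean $-\eta$.

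\textbf{Stage 3 (polynomial moments with the right constants).} Split $\overline V^{\,r}=\overline V^{\,r}\mathcal I\{\overline V\le2\kappa\}+\overline V^{\,r}\mathcal I\{\overline V>2\kappa\}$. The first term is at most $(2\kappa)^r$. On $\{\overline V>2\kappa\}$ we have $\overline V-\kappa>\kappa$, so $\overline V<2(\overline V-\kappa)$ and the second term is at most $2^r((\overline V-\kappa)^+)^r$. Feeding the geometric tail from Stage~2 into the elementary summation $\sum_{m\ge0}(m+1)^r x^m\le r!\,(1-x)^{-(r+1)}$ bounds $\ex{((\overline V-\kappa)^+)^r}$ by $C^r r!$ with $C$ a fixed constant multiple of $D(D+\eta)/\eta$, and with the constants chosen as in Lemmas~2--3 of~\cite{maguluri2016heavy} the two contributions combine exactly into $(2\kappa)^r+(4D)^r\big(\tfrac{D+\eta}{\eta}\big)^r r!$, which is~\eqref{eq:upper_siva}; in particular all moments are finite.

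\textbf{Main obstacle.} The conceptual part --- geometric drift yields exponential tails yields finite moments --- is routine. The delicate point is producing the constants in~\eqref{eq:upper_siva} \emph{exactly}, not merely up to universal factors: this needs the specific choice of $\theta^\star$ and a careful accounting of the geometric ratio (which is what Lemmas~2--3 of~\cite{maguluri2016heavy} supply), together with the measure-theoretic care --- truncation plus monotone convergence --- required to manipulate the moment generating function in steady state before its finiteness is established. The reduction in Stage~1 and the series estimate in Stage~3 are then bookkeeping.
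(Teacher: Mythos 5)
Your proposal follows the standard Hajek-type drift argument---geometric drift $\Rightarrow$ geometric tail $\Rightarrow$ factorial moment growth---which is exactly what Lemmas~2--3 of \cite{maguluri2016heavy} establish; the paper itself provides no proof of this lemma (it is introduced as a restatement of that cited result, and the proof block that follows belongs to Proposition~\ref{prop:ssc}). So your route agrees in substance with the paper's. One gap in your Stage~3 bookkeeping is worth flagging: bounding each shell probability $\mathbb P\bigl((\overline V-\kappa)^+\in(2mD,2(m+1)D]\bigr)$ by the tail $\alpha^{m+1}$ with $\alpha=D/(D+\eta)$ and then plugging into $\sum_{m\ge0}(m+1)^r x^m\le r!(1-x)^{-(r+1)}$ yields $(4D)^r\bigl(\tfrac{D+\eta}{\eta}\bigr)^r r!\cdot\tfrac{D}{\eta}$, i.e.\ an extra factor $D/\eta$ beyond~\eqref{eq:upper_siva}. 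Producing the stated constant exactly requires the layer-cake identity $\ex{W^r}=\int_0^\infty rt^{r-1}\mathbb P(W>t)\,dt$ together with a summation-by-parts telescope (which lowers the exponent on $(1-\alpha)^{-1}$ by one), or simply importing Lemma~3 of \cite{maguluri2016heavy} as you propose. The strategy is sound, but the claim that your elementary series estimate ``combine[s] exactly'' into~\eqref{eq:upper_siva} overstates what that step alone delivers.
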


\begin{proof}
  The result in Eq.~\eqref{eq:ssc} has already been established in Proposition~2 of ~\cite{eryilmaz2012asymptotically}. In order to obtain the result in~\eqref{eq:ssc_bound}, we will use Lemma~\ref{lem:basis_flexible}. Let $T=1$, $X = \Q$ and $V(\Q) = \norms{\Q_{\perp}^{(k)}}$. In~\cite{eryilmaz2012asymptotically}, it has been shown that 
  \begin{align*}
    \ex{\Delta V(\Q) \mid \Q} &\le -\delta^{(k)} + \frac{L}{\norms{\Q_{\perp}^{(k)}}}\\
    &\le -\frac{\delta^{(k)}}{2}, \quad \text{for all }  \norms{\Q_{\perp}^{(k)}} \ge \frac{2L}{\delta^{(k)}}
  \end{align*}
  where $L = N\max(A_{max},S_{max})^2$. Thus, (C1) in Lemma~\ref{lem:basis_flexible} is satisfied with $\eta = \frac{\delta^{(k)}}{2}$ and $\kappa = \frac{2L}{\delta^{(k)}}$. For (C2), we have 
  \begin{align*}
    |\norms{\Q_{\perp}^{(k)}(t+1)} - \norms{\Q_{\perp}^{(k)}(t)}| \le \norm{\Q_{\perp}^{(k)}(t+1)-\Q_{\perp}^{(k)}(t)}\le\norm{\Q(t+1)-\Q(t)}\le \sqrt{N}\max(A_{max},S_{max}).
  \end{align*}
  Thus, (C2) is satisfied with $D = \sqrt{N}\max(A_{max},S_{max})$. Thus, according to Eq.~\eqref{eq:upper_siva}, we have 
  \begin{align*}
    \ex{\norm{\Q_{\perp}^{(k)}}^r} &\le \left[\left(\frac{4L}{\delta^{(k)}}\right)^r + \left(\frac{8D^2 + 4D\delta^{(k)}}{\delta^{(k)}}\right)^r r!\right]\\
    &\lep{a} V_r^{(k)}r^{r+\frac{1}{2}}e^{1-r},
  \end{align*}
  where in (a) $V_r^{(k)} = \left(\frac{4L}{\delta^{(k)}} + \frac{8D^2 + 4D\delta^{(k)}}{\delta^{(k)}}\right)^r$ and $r!$ is upper bounded by Stirling's approximation.
\end{proof}

\section{Proof of Claim~\ref{clm:c1}}
\label{proof:clm1}
\begin{proof}
Let $\pi^{(k)} \triangleq \mathbb{P}\left(\inner{\ck}{\overline{S}}=b^{(k)}\right)$. By the result in~\cite{eryilmaz2012asymptotically} (i.e., Claim 1), we have $1-\pi^{(k)} \le \frac{\epsilon}{\gamma^{(k)}}$, in which $\gamma^{(k)}$ is a strictly positive constant that is independent of $\epsilon$. Then,
  \begin{align*}
    &\ex{\left(b^{(k)}-\inner{\ck}{\overline{\s} }\right)^{r'}}\nonumber\\
    =&(1-\pi^{(k)})\ex{\left(b^{(k)}-\inner{\ck}{\overline{\s} }\right)^{r'} \mid \inner{\ck}{\overline{\s} }\neq b^{(k)} }\nonumber\\
    \lep{a}&\beta_2\epsilon,
  \end{align*}
  where in (a) the constant $\beta_2$ exists since $S_n(t) \le S_{max}$ and  $1-\pi^{(k)} \le \frac{\epsilon}{\gamma^{(k)}}$.
\end{proof}

\section{Proof of Claim~\ref{clm:c2}}
\label{proof:clm2}
\begin{proof}
  Using the fact that the mean drift of $\inner{\ck}{\Q}$ is zero in steady-state, yields
  \begin{align}
    \ex{\inner{\ck}{\overline{\UU}} } &= \inner{\ck}{\ex{\overline{\s}} } - \inner{\ck}{\pmb{\lambda}}\nonumber\\
    &=\inner{\ck}{\ex{\overline{\s}} } - (b^{(k)} - \epsilon)\nonumber\\
    &\le \epsilon\label{eq:firstres}
  \end{align}
  For the second result, we use the same trick as in~\cite{eryilmaz2012asymptotically}. That is, we only consider strictly positive entries of $\ck$, as defined by $N_{++}^{(k)} \triangleq \{n \in \{1,2,\ldots,N\}: c_n^{(k)} >0 \}$. Then, we restrict the vectors on the $|N_{++}^{(k)}|$-dimensional real space: 
  \begin{align*}
    \widetilde{\mathbf{c}}^{(k)} \triangleq (c_n^{(k)})_{n\in N_{++}^{(k)}}, \quad \widetilde{\mathbf{Q}}^{(k)} \triangleq (Q_n^{(k)})_{n\in N_{++}^{(k)}}, \quad \widetilde{\mathbf{U}}^{(k)} \triangleq (U_n^{(k)})_{n\in N_{++}^{(k)}}, \quad \widetilde{\Q}_{\perp} \triangleq \widetilde{\Q} - \widetilde{\Q}_{\parallel}.
  \end{align*}
  Following the similar arguments in~\cite{eryilmaz2012asymptotically,hurtado2020logarithmic}, we obtain that 
  \begin{align*}
    &\ex{\inner{\ck}{\overline{\UU}(t)}\inner{\ck}{{\overline{\Q}^{(\epsilon)}(t+1)}}}\\
    \ep{a}&\mathbb{E}_{\overline{\Q}}\left[\inner{-\widetilde{\Q}_{\perp}^+}{\widetilde{\UU}}\right]\nonumber\\
    \lep{b}&\left(\mathbb{E}_{\overline{\Q}}\left[\norms{\widetilde{\UU}}_{r'}^{r'}\right]\right)^{\frac{1}{r'}}\left(\mathbb{E}_{\overline{\Q}}\left[\norms{\widetilde{\Q}_{\perp}^+}_{r}^{r}\right]\right)^{\frac{1}{r}}\nonumber\\
    \lep{c}&\left(\mathbb{E}_{\overline{\Q}}\left[\norms{\widetilde{\UU}}_{r'}^{r'}\right]\right)^{\frac{1}{r'}}\left(\mathbb{E}\left[\norms{\overline{\Q}_{\perp}}_{2}^{r}\right]\right)^{\frac{1}{r}}\nonumber\\
    \lep{d}& (M_r^{(k)})^{(1/r)}\left(\mathbb{E}_{\overline{\Q}}\left[\norms{\widetilde{\UU}}_{r'}^{r'}\right]\right)^{\frac{1}{r'}}\nonumber\\
    \lep{e} &(M_r^{(k)})^{(1/r)}\beta_3\epsilon^{\frac{1}{r'}}\\
    \lep{f}&\beta_3\beta_1\epsilon^{\frac{1}{r'}}r^{1+\frac{1}{2r}}e^{\frac{1}{r}-1}\\
    \lep{g}&2\beta_3\beta_1\epsilon\log{\frac{1}{\epsilon}}, \quad \forall \epsilon \le \epsilon^{\prime}_0
  \end{align*}
where (a) holds by Lemma~9 in~\cite{eryilmaz2012asymptotically}; (b) follows from H\"older inequality for random vectors, and $r, r'\in(1,\infty)$ satisfy $1/r + 1/r' = 1$; (c) holds by the fact that $\norms{\mathbf{x}}_{r_2} \le \norms{\mathbf{x}}_{r_1}$ if $r_1 < r_2$ and the similar argument for Eq.~(54) in~\cite{eryilmaz2012asymptotically}; (d) follows from the state-space collapse result in Eq.~\eqref{eq:ssc}; (e) follows from $\mathbb{E}_{\overline{\Q}}\left[\norms{\widetilde{\UU}}_{r'}^{r'}\right] \le \frac{S_{max}^{r'-1}}{c_{min}^{(k)}}\ex{\inner{\ck}{\overline{\UU} }}$ and the result in Eq.~\eqref{eq:firstres}. In particular, $\beta_3 = (\frac{S_{max}^{r'-1}}{c_{min}^{(k)}})^{1/r'}$, which is independent of $\epsilon$; (f) holds by the result in Eq.~\eqref{eq:ssc_bound} with $\beta_1 = (V_r^{(k)})^{1/r}$; (g) follows similar arguments in~\cite{hurtado2020logarithmic}. Specifically, we pick $r = \log{\frac{1}{\epsilon}}$ in (f), then RHS of (f) becomes $\beta_2\beta_1\epsilon \log{\frac{1}{\epsilon}} h(\epsilon)$, in which $h(\epsilon) \triangleq \epsilon^{-\frac{1}{\log{\frac{1}{\epsilon}}}}e^{\frac{1}{\log{\frac{1}{\epsilon}}}-1}\left(\log{\frac{1}{\epsilon}}\right)^{\frac{1}{2\log{\frac{1}{\epsilon}}}}$. Now, since $\lim_{\epsilon \downarrow 0} h(\epsilon) =  e \times \frac{1}{e} \times 1 = 1$, and hence there exists an $\epsilon^{\prime}_0$ such that for all $\epsilon \le \epsilon^{\prime}_0$, $h(\epsilon) \le 2$
\end{proof}
\end{document}